\newcommand{\tfa}{time-frequency analysis}
\newcommand{\stft}{short-time Fourier transform}
\newcommand{\modsp}{modulation space}
\newtheorem{theorem}{Theorem}[section]
\newtheorem{lemma}[theorem]{Lemma}
\newtheorem{proposition}[theorem]{Proposition}
\newtheorem{definition}[theorem]{Definition}
\newtheorem{remark}[theorem]{Remark}
\newcommand{\beqa}{\begin{eqnarray*}}
\newcommand{\eeqa}{\end{eqnarray*}}
\newcommand{\field}[1]{\mathbb{#1}}
\newcommand{\bR}{\field{R}}        
\newcommand{\bN}{\field{N}}        
\newcommand{\bZ}{\field{Z}}        
\def\G{\mathcal{G}}
\def\la{\lambda}
\def\eps{\epsilon}
\def\cS{\mathcal{S}}
\def\cG{\mathcal{G}}
\def\cM{\mathcal{M}}
\def\cC{\mathcal{C}}
\def\cN{\mathcal{N}}
\def\a{\aleph}
\def\rd{\bR^d}
\def\rdd{{\bR^{2d}}}
\def\zdd{{\bZ^{2d}}}
\def\lrd{L^2(\rd)}
\def\intrd{\int_{\rd}}
\def\R{\right)}
\def\<{\left<}
\def\>{\right>}
\def\mv1{M_v^1}
\def\phas{(x,\o )}
\def\mn{(m,n)}
\def\mn'{(m',n')}
\def\o{\xi}
\def\a{\alpha}
\def\b{\beta}
\def\R{\mathbb{R}}
\def\Ren{\mathbb{R}^d}
\def\Renn{\mathbb{R}^{2d}}
\def\sch{\mathcal{S}}
\def\Fur{\mathcal{F}}
\def\Sn2{S_{2}(L^{2}(\Ren))}
\def\S1{S_{1}(L^{2}(\Ren))}
\def\sig00{\sigma_{0,0}}
\def\la{\langle}
\def\ra{\rangle}
\begin{document}
\begin{abstract}
We give a brief survey of recent results concerning almost diagonalization of pseudodifferential operators via Gabor frames. Moreover,  we show new connections between symbols  with Gevrey, analytic or ultra-analityc regularity and time-frequency analysis of the corresponding pseudodifferential operators. 
\end{abstract}

\title[A few remarks on time-frequency analysis]{A few remarks on time-frequency analysis of Gevrey, analytic and ultra-analytic functions}

\author{Elena Cordero,  Fabio Nicola  and Luigi Rodino}
\address{Dipartimento di Matematica,
Universit\`a di Torino, via Carlo Alberto 10, 10123 Torino, Italy}
\address{Dipartimento di Scienze Matematiche,
Politecnico di Torino, corso Duca degli Abruzzi 24, 10129 Torino,
Italy}
\address{Dipartimento di Matematica,
Universit\`a di Torino, via Carlo Alberto 10, 10123 Torino, Italy}

\email{elena.cordero@unito.it}
\email{fabio.nicola@polito.it}
\email{luigi.rodino@unito.it}

\subjclass[2000]{35S30,
47G30, 42C15}
\keywords{pseudodifferential operators, Gelfand-Shilov spaces, Gevrey spaces, short-time Fourier
 transform, Gabor frames}
\maketitle
\section{Introduction}
The objective of this paper is to report on recent progress concerning time-frequency analysis of pseudodifferential operators with smooth symbols. We discuss in particular connections between, on one side, symbols in the classical Gevrey or (ultra-)analytic spaces, and, on the other, (ultra-)modulation spaces,  Gabor frames, which are the framework of time-frequency analysis.\par

Modulation spaces having moderate weights were introduced by Feichtinger in 1983, and since then have been extensively studied by Feichtinger and many other authors, cf. in particular the textbook \cite{grochenig}, to which we address for further references. We recall their definition and main properties in the subsequent Section \ref{moddef}, but the main insight to understand their introduction  being to consider the
decay property of a function with respect to the space variable and the variable of
its Fourier transform simultaneously.
The time-frequency representation employed for their definition is the so-called short-time Fourier transform (STFT), whose building blocks are the linear operators of translation and modulation (so-called time-frequency shifts) given by
$$
T_x f(\cdot) = f(\cdot - x) \;\;\; \mbox{ and } \;\;\;
 M_\o f(\cdot) = e^{2\pi i \o \cdot} f(\cdot), \;\;\; x,\,\o \in {\bR}^d.
$$
For $z=(x,\xi)$ we shall also write $\pi(z)f=M_\xi T_x f$.
Indeed, if  $g$ is a non-zero window function in the
Schwartz class $\cS(\rd)$, then the short-time Fourier
transform (STFT) of a a function/tempered distribution $f$
with respect to the window $g$ is given by
\begin{equation}\label{stft} V_gf \phas = \la f,M_\o T_x g\ra =\int_{\Ren}
 f(t)\, {\overline {g(t-x)}} \, e^{-2\pi i\o t}\,dt\, .
\end{equation}
In short: we say that $f$ is in the  modulation space $M^{p,q}_m(\rd)$, $1\leq p,q\leq\infty$, where $m$ is a moderate weight function (hence with at most exponential growth at infinity)  if $V_g f\in L^{p,q}_m(\rdd)$.
So it is clear that the bigger the growth of $m$ the faster  the decay  of $V_g f$ is at infinity.\par
Starting from signal analysis and quantum mechanics, these spaces have then been proved to be the right spaces for
symbols of pseudodifferential operators (see \cite{GH99,labate2,Sjo95,psdoNC09,str06} and references therein),
in particular localization operators (cf. e.g. \cite{locNC09}), Fourier multipliers (\cite{benyi,fabiomultiplier}) and
  recently Fourier integral operators of Schr\"odinger type \cite{mathnacr08,cornic1,cornic2,fio5,fio1}, providing a good framework  for the study
  of PDE's \cite{fio3}.

If we limit the use of time-frequency shifts on a lattice $\Lambda=\a\bZ^d\times\b\bZ^d\subset \rdd$ and fix a window function $g\in L^2(\rd)$, then  the sequence $\G(g,\Lambda)=\{g_{m,n}=M_nT_m g$, $(m,n)\in \Lambda \}$ forms a so-called Gabor system.
The set $\G(g,\Lambda)$   is a Gabor frame, if there exist constants $A,B>0$ such that
\begin{equation*}
A\|f\|_2^2\leq\sum_{(m,n)\in\Lambda}|\langle f,g_{m,n}\rangle|^2\leq B\|f\|^2_2,\qquad \forall f\in L^2(\rd).
\end{equation*}
For details see the next Section $2$, where beside modulation spaces and Gabor frames, we treat classes of symbols and Gelfand-Shilov functions, which represent the natural window functions in our context.\par
The first paper which uses Gabor frames to approximately diagonalize pseudodifferential operators is \cite{rochberg}. Later, motivated by the study of Sj{\"o}strand \cite{Sjo95}, Gr{\"o}chenig proved an almost diagonalization for symbols in the Sj{\"o}strand class, that is the modulation space $M^{\infty,1}$, whose further generalization in \cite{GR} is recalled in Theorem \ref{CR1} below. What is remarkable in  Theorem \ref{CR1} is that the rate of decay in the almost diagonalization, expressed in terms of Wiener amalgam spaces,  characterizes the symbol class, which reveals to be a class of modulation spaces. The discrete representation of a pseudodifferential operator via Gabor matrix is one of the key ingredient for  numerical applications, as shown in \cite{fio3} for the Gabor matrix of Schr\"odinger propagators.\par
Motivated by these characterizations and by numerical applications, in the recent work \cite{CNR12} we study diagonalization of pseudodifferential operators  with symbols of Gevrey, analytic and ultra-analytic type, and  show that Gabor frames surprisingly reveal to be an  efficient tool for representing solutions to hyperbolic and parabolic-type differential equations with constant coefficients. \par
Here we are mainly focused on the theoretical  aspects of the almost diagonalization: as a preliminary step we establish the equivalence between Gevrey, analytic and ultra-analytic regularity of symbols, and their membership to modulation spaces with sub-exponential, exponential and super-exponential weights, respectively. For these aspects, our results are strictly related to those of \cite{ToftGS}. 
The main result of the paper, Theorem \ref{caratterizz}, characterizes the corresponding classes of pseudodifferential operators in terms of exponential decay of the almost diagonalization.
In the last Section $4$, we report on the sparsity result obtained in \cite{CNR12}, which motivates this study for numerical applications.
\vskip0.5truecm
\par\noindent
{\bf Notations.}
The Schwartz class is denoted by
$\sch(\Ren)$, the space of tempered
distributions by  $\sch'(\Ren)$.   We
use the brackets  $\la f,g\ra$ to
denote the extension to $\sch '
(\Ren)\times\sch (\Ren)$ of the inner
product $\la f,g\ra=\int f(t){\overline
{g(t)}}dt$ on $L^2(\Ren)$.

Euclidean norm of $ x \in {\bR}^d $ is given by $ |x| = \left( x_1 ^2 + \dots +x_d
^2 \right) ^{1/2}, $ and $ \langle x \rangle = ( 1 + |x|^2 )^{1/2}.$ We write $xy=x\cdot y$ for  the scalar product on
$\Ren$, for $x,y \in\Ren$.

For multiindices $ \alpha, \beta \in {\bN} ^d, $
we have $ |\alpha| = \alpha_1 + \dots + \alpha_d, $
$ \alpha! = \alpha_1! \cdots \alpha_d!, $ $ x^{\alpha} =  x_1 ^{\alpha_1}
\cdots  x_d ^{\alpha_d}, $ $ x \in {\bR}^d, $
and, for $ \beta \leq \alpha, $ i.e. $ \beta_j \leq \alpha_j, $
$ j \in \{ 1,2,\dots, d\}, $ $
{\alpha \choose \beta} $ $ =  {\alpha_1 \choose \beta_1} \cdot \cdots \cdot
{\alpha_d \choose \beta_d}. $
The letter $ C $ denotes a positive constant, not necessarily the same at
every appearance.

The operator of partial differentiation $ \partial $ is given by
$$
\partial^{\alpha} =\partial_x ^{\alpha}=
\partial^{\alpha_1} _{x_1} \cdots  \partial^{\alpha_d} _{x_d}
$$
for all multiindices $ \alpha \in {\bN} ^d $ and all
$ x = (x_1, \dots, x_d) \in {\bR} ^d. $ If
$f$ and $g$ are smooth enough, then the Leibnitz formula holds
$$ \displaystyle
\partial^{\alpha} (f g) (x) = \sum _{\beta \leq \alpha} {\alpha \choose \beta}
\partial^{\alpha - \beta} f (x) \partial^{\beta} g (x).
$$
Observe
\begin{equation}\label{2alfa}
\sum _{\beta \leq \alpha} {\alpha \choose \beta}=2^{|\a|},
\end{equation}
and
\begin{equation}\label{eqn:39}
|\a|!\leq d^{|\a|}\a!.
\end{equation}
 The Fourier
transform is normalized to be ${\hat
  {f}}(\o)=\Fur f(\o)=\int
f(t)e^{-2\pi i t\o}dt$.

For $0< p\leq\infty$ and a  weight $m$, the space $\ell^{p}_m
(\Lambda )$
 is  the (quasi-)Banach
space of sequences $a=\{{a}_{\lambda}\}_{\lambda \in \Lambda }$
on a  lattice $\Lambda$, such that
$$\|a\|_{\ell^{p}_m}:=\left(\sum_{\lambda\in\Lambda}
|a_{\lambda}|^p m(\lambda)^p\right)^{1/p}<\infty
$$
(with obvious changes when $p=\infty$).

We
shall use the notation
$A\lesssim B$ to express the inequality
$A\leq c B$ for a suitable
constant $c>0$, and  $A
\asymp B$  for the equivalence  $c^{-1}B\leq
A\leq c B$.

 \section{Time-frequency analysis and Gelfand-Shilov spaces}
\subsection{Gelfand-Shilov  Spaces}
The Schwartz class $\cS(\rd)$ does not give enough information about how fast a function $f\in \cS(\rd)$ and its derivatives decay at infinity.
This is the main motivation to use subspaces of the Schwartz class, so-called Gelfand-Shilov type spaces, introduced in  \cite{GS} and now available also  in a textbook \cite{NR}. Let us recall their definition and some of their properties.

\begin{definition} Let there be given $ s, r \geq0$.
A function $f\in\cS(\rd)$ is in the Gelfand-Shilov type space $ S^{s} _{r} (\rd) $ if there exist  constants $A,B>0$ such that
\begin{equation}\label{GFdef}
|x^\a\partial^\beta f(x)| \lesssim A^{|\a|}B^{|\beta|}(\a!)^r(\beta!)^s,\quad \a,\beta\in\bN^d.
\end{equation}
\end{definition}
The space $S^{s} _{r}(\rd) $
is nontrivial if and only if $ r + s > 1,$ or $ r + s = 1$ and $r, s > 0$
\cite{GS}.  So the smallest nontrivial space with $r=s$ is provided by $S^{1/2}_{1/2}(\rd)$. Every function of the type $P(x)e^{-a|x|^2}$, with $a>0$ and $P(x)$ polynomial on $\rd$, is in  $S^{1/2}_{1/2}(\rd)$.  Observe that  $S^{s_1} _{r_1}(\rd)\subset S^{s_2} _{r_2}(\rd)$ for $s_1\leq s_2$ and $r_1\leq r_2$.
Moreover, if $f\in S^{s} _{r}(\rd)$, for every $\delta,\gamma\in \bN^d$, $x^\delta\partial^\gamma f \in S^{s}_{r}(\rd)$.
 The action of the Fourier transform on $S^{s} _{r}(\rd) $ interchanges the indices $s$ and $r$, as explained in the following theorem.

\begin{theorem} For $f\in \cS(\rd)$ we have $f\in S^{s} _{r}(\rd) $ if and only if $\hat{f}\in S^{r}_{s}(\rd).$
\end{theorem}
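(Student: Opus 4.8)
The plan is to reduce everything to the two standard intertwining relations of the Fourier transform with multiplication and differentiation, and then to estimate a sup-norm by an $L^1$-norm. With the present normalization $\hat f(\omega)=\int f(t)e^{-2\pi i t\omega}\,dt$, one has $\omega^\alpha\cF g=(2\pi i)^{-|\alpha|}\cF(\partial^\alpha g)$ and $\partial^\beta\cF f=(-2\pi i)^{|\beta|}\cF(t^\beta f)$. Combining these for a pair of multi-indices $\alpha,\beta$ gives
$$\omega^\alpha\partial^\beta\hat f(\omega)=(-2\pi i)^{|\beta|}(2\pi i)^{-|\alpha|}\,\cF\big(\partial^\alpha(t^\beta f)\big)(\omega),$$
so that the elementary bound $\|\cF g\|_\infty\le\|g\|_{L^1}$ yields $|\omega^\alpha\partial^\beta\hat f(\omega)|\le(2\pi)^{|\beta|}\,\|\partial^\alpha(t^\beta f)\|_{L^1}$. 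Thus it suffices to estimate this $L^1$-norm by $(\alpha!)^s(\beta!)^r$, which is exactly the membership condition for $S^r_s$, with the indices $r,s$ interchanged as required.

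First I would expand $\partial^\alpha(t^\beta f)$ by the Leibniz formula, writing $\gamma=\alpha-\delta$, so that the generic summand is a constant multiple of $\binom{\alpha}{\gamma}\frac{\beta!}{(\beta-\delta)!}\,t^{\beta-\delta}\partial^{\alpha-\delta}f$, with $\delta\le\alpha$ and $\delta\le\beta$. To pass from the pointwise estimate \eqref{GFdef} to an $L^1$-bound I would use the standard device $\|g\|_{L^1}\le C_d\sup_x\langle x\rangle^{2d}|g(x)|$, expand $\langle x\rangle^{2d}$ as a finite sum of monomials $x^{2\nu}$ with $|\nu|\le d$, and apply \eqref{GFdef} to each term $x^{\beta-\delta+2\nu}\partial^{\alpha-\delta}f$. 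Since $|\nu|\le d$ is bounded, the inequality $(\mu+\lambda)!\le 2^{|\mu|+|\lambda|}\mu!\lambda!$ absorbs the factors $(2\nu)!$ and the powers of $2$ into constants depending only on $d$, giving
$$\|t^{\beta-\delta}\partial^{\alpha-\delta}f\|_{L^1}\lesssim \tilde A^{|\beta-\delta|}\tilde B^{|\alpha-\delta|}\big((\beta-\delta)!\big)^r\big((\alpha-\delta)!\big)^s.$$

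The decisive combinatorial step, which I expect to be the only real obstacle, is to show that each summand is dominated by $(\beta!)^r(\alpha!)^s$ up to a factor at most exponential in $|\alpha|+|\beta|$. Using $\binom{\alpha}{\gamma}\le2^{|\alpha|}$, the identity $\frac{\beta!}{(\beta-\delta)!}=\binom{\beta}{\delta}\,\delta!\le2^{|\beta|}\delta!$, and the elementary bounds $\frac{(\beta-\delta)!}{\beta!}\le(\delta!)^{-1}$ and $\frac{(\alpha-\delta)!}{\alpha!}\le(\delta!)^{-1}$, the whole $\delta$-dependent part collapses to
$$\delta!\,\big((\beta-\delta)!\big)^r\big((\alpha-\delta)!\big)^s\le(\beta!)^r(\alpha!)^s\,(\delta!)^{1-r-s}.$$
This is exactly where the hypothesis enters, and it is resolved by the nontriviality range of the spaces. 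For a nontrivial $S^s_r$ one has $r+s\ge1$, hence $1-r-s\le0$ and $(\delta!)^{1-r-s}\le1$; when $r+s<1$ both $S^s_r$ and $S^r_s$ are trivial and the statement is vacuous. Summing over the at most $2^{|\alpha|}$ admissible $\delta$ and collecting all exponential constants into new constants $A',B'$ then gives $|\omega^\alpha\partial^\beta\hat f(\omega)|\lesssim A'^{|\alpha|}B'^{|\beta|}(\alpha!)^s(\beta!)^r$, i.e. $\hat f\in S^r_s$. The converse needs no new work: applying the implication just proved to $\hat f$ and using Fourier inversion $\cF(\cF f)(x)=f(-x)$ together with the evident invariance of $S^s_r$ under $x\mapsto-x$ recovers $f\in S^s_r$ from $\hat f\in S^r_s$.
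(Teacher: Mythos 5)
Your argument is correct. Note first that the paper itself offers no proof of this statement: it is quoted as a classical fact from Gelfand--Shilov \cite{GS}, and in the paper's own framework it would follow instantly from the manifestly symmetric characterization (b) of Theorem \ref{simetria} (which is likewise stated without proof). What you have written is essentially the standard self-contained argument that underlies that characterization: the intertwining relations $\omega^\alpha\cF g=(2\pi i)^{-|\alpha|}\cF(\partial^\alpha g)$ and $\partial^\beta\cF f=(-2\pi i)^{|\beta|}\cF(t^\beta f)$, the bound $\|\cF g\|_\infty\le\|g\|_{L^1}$, the passage from the pointwise estimate \eqref{GFdef} to an $L^1$-bound via $\langle x\rangle^{2d}$, and the Leibniz expansion. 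The decisive step is exactly where you locate it: after writing $\beta!/(\beta-\delta)!\le 2^{|\beta|}\delta!$ and $((\beta-\delta)!)^r((\alpha-\delta)!)^s\le(\beta!)^r(\alpha!)^s(\delta!)^{-r-s}$, the surviving factor $(\delta!)^{1-r-s}$ is bounded by $1$ precisely when $r+s\ge1$, and for $r+s<1$ both spaces reduce to $\{0\}$ by the nontriviality criterion quoted in the paper, so the equivalence holds trivially (I would say ``trivially'' rather than ``vacuously'', since one still needs $0\in S^r_s$, but that is immediate). The converse via $\cF(\cF f)(x)=f(-x)$ and reflection-invariance of $S^s_r$ is also correct. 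In short: the proposal fills a gap the paper leaves to the literature, and does so correctly; the only cosmetic improvements would be to state explicitly that the constants $\tilde A,\tilde B$ may be assumed $\ge1$ before discarding $|\beta-\delta|$ in favour of $|\beta|$, and to count the $\delta$-sum by $\prod_j(\alpha_j+1)\le 2^{|\alpha|}$, both of which you implicitly do.
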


Therefore for $s=r$ the spaces $S^{s} _{s}(\rd)$ are invariant under the action of the Fourier transform.

%
\begin{theorem} \label{simetria} Assume $ s>0, r>0, s+r\geq1$. For $f\in  \cS(\rd)$, the following conditions are equivalent:
\begin{itemize}
\item[a)] $f \in  S^{s} _{r}(\rd)$ .
\item[b)]  There exist  constants $A, B>0,$ such that
$$ \| x^{\a}  f \|_{L^\infty} \lesssim A^{|\a|} (\a !) ^{r}  \quad\mbox{and}\quad  \| \o^{\b}  \hat{f} \|_{L^\infty} \lesssim  B^{|\beta|} ( \b!) ^{s},\quad \a,\b\in \bN^d. $$
\item[c)]  There exist  constants $A, B>0,$ such that
$$ \| x^{\a}  f \|_{L^\infty} \lesssim A^{|\a|} (\a !) ^{r}  \quad\mbox{and}\quad  \| \partial^{\b}  f \|_{L^\infty} \lesssim  B^{|\beta|} ( \b!) ^{s},\quad \a,\b\in \bN^d. $$

\item[d)] There exist  constants $h, k>0,$ such that
$$
 \|f  e^{h  |x|^{1/r}}\|_{L^\infty} < \infty \quad\mbox{and}\quad \| \hat f  e^{k  |\o|^{1/s}}\|_{L^\infty} < \infty.$$
\end{itemize}
\end{theorem}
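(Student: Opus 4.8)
The plan is to reduce the whole equivalence to a single one-variable estimate plus one genuinely hard ``separate $\Rightarrow$ joint'' step. First I would isolate the elementary lemma that, for any $\rho>0$ and any $g\in\cS(\rd)$, the moment bounds $\|x^\alpha g\|_{L^\infty}\lesssim A^{|\alpha|}(\alpha!)^\rho$ for all $\alpha\in\bN^d$ are equivalent to a pointwise exponential bound $|g(x)|\lesssim e^{-h|x|^{1/\rho}}$ for some $h>0$. In one direction one optimizes $|g(x)|\le C\,A^{|\alpha|}(\alpha!)^\rho/|x^\alpha|$ over $\alpha$ (equivalently over $N=|\alpha|$, after reducing $|x|^{N}$ to monomials), and Stirling's formula turns the minimum into $e^{-h|x|^{1/\rho}}$; the converse is the dual computation $\sup_x|x|^{N}e^{-h|x|^{1/\rho}}\lesssim A^{N}(N!)^{\rho}$. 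Applying this lemma with $g=f,\ \rho=r$ and with $g=\hat f,\ \rho=s$ identifies the first condition of (b)/(c)/(d) with one another and the second condition of (b) with that of (d), which already yields (b)$\Leftrightarrow$(d).

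Next I would dispatch the easy implications using the Fourier relations that exchange $\partial^\beta$ with multiplication by $(2\pi i\o)^\beta$ on the transform side, together with the device of inserting the integrable weight $\la\cdot\ra^{-2d}$ (recall $\int_{\rd}\la x\ra^{-2d}\,dx<\infty$) to pass from an $L^\infty$ bound valid for every multi-index to an $L^1$ bound. Concretely, (a)$\Rightarrow$(c) is immediate on setting $\beta=0$ and $\alpha=0$; for (a)$\Rightarrow$(b) the first condition is (a) with $\beta=0$, while for the second one writes $\|\o^\beta\hat f\|_{L^\infty}\le\|\partial^\beta f\|_{L^1}$ and bounds $\la x\ra^{2d}|\partial^\beta f|$ by the joint estimate of (a) before integrating against $\la x\ra^{-2d}$; and for (b)$\Rightarrow$(c) (whose first condition coincides with that of (b)) one uses $\|\partial^\beta f\|_{L^\infty}\le(2\pi)^{|\beta|}\|\o^\beta\hat f\|_{L^1}$, expands $\la\o\ra^{2d}$, and controls the factors $((\beta+2\nu)!)^s$ with $|\nu|\le d$ by $C^{|\beta|}(\beta!)^s$ via inequalities of the type \eqref{2alfa}. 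These steps give the chain (a)$\Rightarrow$(b)$\Rightarrow$(c), which together with (b)$\Leftrightarrow$(d) leaves only one arrow to be closed.

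What remains, and what I expect to be the whole difficulty, is to close the cycle with (c)$\Rightarrow$(a): to upgrade the two \emph{separate} pieces of information --- the decay of $f$ alone and the boundedness of all derivatives $\partial^\beta f$ alone --- into the \emph{joint} control of $x^\alpha\partial^\beta f$. The obstruction is that every naive Fourier manipulation of $x^\alpha\partial^\beta f$ reproduces a joint (moment $\times$ derivative) quantity on the $\hat f$-side, so the argument loops and some genuinely new input is needed. My plan is to show that each derivative inherits the decay of $f$ in the quantitative form $|\partial^\beta f(x)|\lesssim B^{|\beta|}(\beta!)^s\,e^{-h|x|^{1/r}}$, after which the one-variable lemma of the first paragraph, applied to $g=\partial^\beta f$, yields exactly (a). To propagate the decay from $f$ to $\partial^\beta f$ I would use a Landau--Kolmogorov/Taylor interpolation on small balls: a Taylor expansion bounds $|\partial^\beta f(x)|$ by a local supremum of $|f|$ (which decays) divided by $\delta^{|\beta|}$, plus a remainder governed by the higher derivative bounds times a power of $\delta$, and choosing $\delta=\delta(x)$ optimally transfers the exponential decay at the cost of a controlled loss in the rate and a factor that Stirling keeps of the order $B^{|\beta|}(\beta!)^s$. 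The hypothesis $r+s\ge1$ is precisely what makes this factorial bookkeeping balance, via \eqref{eqn:39}; this is the delicate point and the main obstacle of the proof.
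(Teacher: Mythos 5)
First, a point of reference: the paper does not prove Theorem \ref{simetria} at all --- it is recalled as a classical fact from \cite{GS} (see also \cite{NR}) --- so there is no in-paper argument to measure you against and your proposal has to stand on its own. Its architecture is sound and is the standard one: the one-variable lemma of your first paragraph is exactly Proposition \ref{equi} of the paper, and the implications (a)$\Rightarrow$(b)$\Rightarrow$(c) and (b)$\Leftrightarrow$(d), via $\widehat{\partial^\beta f}=(2\pi i\xi)^\beta\hat f$, the insertion of $\la\cdot\ra^{-2d}$, and the bound $((\beta+2\nu)!)^s\le C^{|\beta|}(\beta!)^s$ for $|\nu|\le d$, are all correct as sketched.

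The gap is in the step you yourself single out, (c)$\Rightarrow$(a), and specifically in the mechanism you propose for it. A $k$-th order Taylor/divided-difference identity with remainder controlled by \emph{one} extra derivative gives, in one variable and for $x>0$,
$$|f^{(k)}(x)|\lesssim 2^k\delta^{-k}\sup_{[x,x+k\delta]}|f|+C^k\delta\,\|f^{(k+1)}\|_{L^\infty}\lesssim 2^k\delta^{-k}e^{-hx^{1/r}}+C^kB^{k+1}((k+1)!)^s\,\delta ,$$
and optimizing over $\delta$ balances the two terms at a value of order $C^kB^k((k+1)!)^{sk/(k+1)}e^{-hx^{1/r}/(k+1)}$: the factorial bookkeeping is fine, but the decay rate degenerates to $h/(k+1)$ and is \emph{not uniform in $k$}, so you cannot then feed $g=\partial^\beta f$ into your one-variable lemma with a single constant $h_1$. (Choosing $\delta$ small enough to preserve the full rate in the second term instead makes the first term blow up like $e^{k\tilde h x^{1/r}}$.) The standard repair is to interpolate between $f$ and $f^{(2k)}$ rather than $f^{(k+1)}$: a Landau--Kolmogorov/Gorny inequality on the half-line,
$$\sup_{[x,\infty)}|f^{(k)}|\le C^{n}\Bigl(\sup_{[x,\infty)}|f|\Bigr)^{1-k/n}\Bigl(\sup_{\bR}|f^{(n)}|\Bigr)^{k/n},\qquad n=2k,$$
which keeps the fixed fraction $1/2$ of the exponent and yields $|f^{(k)}(x)|\lesssim \tilde B^k(k!)^s e^{-(h/2)|x|^{1/r}}$ via $((2k)!)^{s/2}\le 2^{sk}(k!)^s$; proving this inequality with constants $C^n$ (equivalently, running the induction that controls the intermediate derivatives hidden in a remainder of order $2k$) is the actual content of the step and is absent from your sketch. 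I would also not lean on the claim that $r+s\ge1$ is what balances the bookkeeping here: in the interpolation argument just described that hypothesis plays no visible role in closing (c)$\Rightarrow$(a); it is what guarantees $S^s_r(\rd)$ is nontrivial, i.e.\ that the theorem has content at all.
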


A suitable window class for weighted modulation spaces (see Definition \ref{prva} below) is the Gelfand-Shilov  space $\Sigma^1_1(\rd)$, consisting of  functions $f\in\cS(\rd)$ such that for {\it every}  constant $A>0$ and $B>0$
\begin{equation}\label{sigmadef}
|x^\a\partial^\beta f(x)| \lesssim A^{|\a|}B^{|\beta|}\a!\beta!,\quad \a,\beta\in\bN^d.
\end{equation}
We have $S^s_s(\rd)\subset\Sigma^1_1(\rd)\subset S^1_1(\rd)$ for every $s<1$.
Observe that the characterization of Theorem \ref{simetria} can be adapted to $\Sigma^1_1(\rd)$ by replacing the words ``there exist'' by ``for every'' and taking $r=s=1$. Similarly, one can define $\Sigma^r_s(\rd)$ for any $r>0, s>0$, with $r+s>1$. \par
Let us underline the following property, proved in \cite[Proposition 6.1.5]{NR} and \cite[Proposition 2.4]{CNR12},
 exhibiting two  equivalent ways of expressing the exponential decay of a continuous function $f$ on $\rd$.
\begin{proposition}\label{equi} Consider $r>0$ and let $h$ be a continuous function on $\rd$. Then the following conditions are equivalent:\\
(i) There exists a constant $\eps>0$ such that
\begin{equation}\label{eqn:A.7}
|h(x)|\lesssim e^{-\eps |x|^{\frac1r}},\quad x\in\rd,
\end{equation}
\noindent
(ii)  There exists a constant $C>0$ such that
\begin{equation}\label{powerdecay}
|x^\a h(x)|\lesssim C^{|\a|}(\a!)^{r},\quad x\in\rd, \,\a\in\bN^d.
\end{equation}
\end{proposition}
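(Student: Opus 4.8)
The plan is to prove the two implications separately, converting in each direction between the factorial-weighted moment bounds of \eqref{powerdecay} and the exponential decay \eqref{eqn:A.7} by a single optimization in the order of the estimate (a Stirling balance).

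\textbf{Direction (i) $\Rightarrow$ (ii).} Since $|x^\a|=\prod_j|x_j|^{\a_j}\leq |x|^{|\a|}$, condition \eqref{eqn:A.7} gives $|x^\a h(x)|\lesssim |x|^{|\a|}e^{-\eps|x|^{1/r}}$, so it suffices to bound $M_N:=\sup_{t\geq0}t^N e^{-\eps t^{1/r}}$ with $N=|\a|$. Differentiating $N\log t-\eps t^{1/r}$ shows the maximum is attained at $t_\ast=(Nr/\eps)^r$, whence $M_N=(r/\eps)^{rN}N^{rN}e^{-rN}$. By Stirling, $N^Ne^{-N}\lesssim N!$, so $M_N\lesssim C^{|\a|}(|\a|!)^r$ with $C=(r/\eps)^r$. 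Finally I would invoke the elementary inequality $|\a|!\leq d^{|\a|}\a!$, recorded as \eqref{eqn:39}, to replace $(|\a|!)^r$ by $(d^r)^{|\a|}(\a!)^r$, producing \eqref{powerdecay} with constant $Cd^r$.

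\textbf{Direction (ii) $\Rightarrow$ (i).} I would expand $|x|^{2k}$ by the multinomial theorem, $|x|^{2k}=\sum_{|\ga|=k}\binom{k}{\ga}x^{2\ga}$, all terms nonnegative, and apply \eqref{powerdecay} to each $x^{2\ga}$:
\[
|x|^{2k}|h(x)|=\sum_{|\ga|=k}\binom{k}{\ga}|x^{2\ga}h(x)|\lesssim C^{2k}\sum_{|\ga|=k}\binom{k}{\ga}\big((2\ga)!\big)^{r}.
\]
I would then control the sum by the crude bounds $(2\ga)!\leq(2k)!$, $\sum_{|\ga|=k}\binom{k}{\ga}=d^{k}$, and $(2k)!\leq 4^{k}(k!)^{2}$, obtaining $|x|^{2k}|h(x)|\lesssim F^{k}(k!)^{2r}$ with $F=4^{r}d\,C^{2}$, uniformly in $k$ and valid for every $r>0$. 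This is the step to be executed carefully, since the constant $F$ must stay independent of $k$.

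With these moment bounds in hand I would fix $x$ and optimize over $k$: writing $u=|x|$, Stirling turns $\log\big(F^{k}(k!)^{2r}/u^{2k}\big)$ into $k\log F+2rk\log k-2rk-2k\log u$ up to lower-order terms, a quantity minimized near $k\asymp u^{1/r}$ where it equals $-2rF^{-1/(2r)}u^{1/r}+o(u^{1/r})$. Choosing $k=\lfloor u^{1/r}/F^{1/(2r)}\rfloor$ then yields $|h(x)|\lesssim e^{-\eps|x|^{1/r}}$ for some $\eps>0$ (slightly smaller than the idealized $2rF^{-1/(2r)}$ so as to absorb the polynomial Stirling corrections and the floor), while the region $|x|\leq1$ is handled trivially by continuity of $h$. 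The main obstacle in both directions is precisely this degree-optimization: one must check that the balance $k\sim|x|^{1/r}$ produces the sharp exponent $1/r$ and that all polynomial prefactors coming from Stirling can be absorbed into the constants $C$ and $\eps$.
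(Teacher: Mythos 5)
Your proof is correct. The paper does not reproduce an argument for this proposition (it cites \cite[Proposition 6.1.5]{NR} and \cite[Proposition 2.4]{CNR12}), but the constants recorded in Remark \ref{linkconst} show the cited proof is the same Stirling-type degree optimization you carry out: in the direction (i) $\Rightarrow$ (ii) your constant $C=(rd/\eps)^r$ matches the paper's exactly, and in the direction (ii) $\Rightarrow$ (i) your multinomial expansion of $|x|^{2k}$ yields the admissible range $\eps< r\,d^{-1/(2r)}C^{-1/r}$, marginally better than the paper's $\eps<r(dC)^{-1/r}$, which evidently comes from the cruder bound $|x|\le\sum_j|x_j|$. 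Both directions of your argument, including the absorption of the polynomial Stirling corrections by shrinking $\eps$, are sound.
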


\begin{remark}\label{linkconst}
To be precise, the relation  between the constants  $\eps$ and $C$ is as follows.
Assuming \eqref{eqn:A.7}, then \eqref{powerdecay} is satisfied with $C=\displaystyle{\left(\frac { r d}{\eps}\right)^r}$. Viceversa, \eqref{powerdecay}
implies \eqref{eqn:A.7} for any $\epsilon<r (d C)^{-\frac{1}{r}}$. Also, it follows from the proof that the constant implicit in the notation $\lesssim$ in \eqref{eqn:A.7} depends only on the corresponding one in \eqref{powerdecay} and viceversa.  \par
\end{remark}

The strong dual spaces of $S^s_r(\rd)$ and  $\Sigma^1_1(\rd)$  are the so-called spaces of tempered ultra-distributions,  denoted by $(S^s_r)'(\rd)$ and $(\Sigma^1_1)'(\rd)$, respectively. Notice that they contain the space of tempered distributions $\cS'(\rd)$.
Moreover the spaces $S^s_r(\rd)$ are nuclear spaces \cite{mitjagin}. This provides a kernel theorem for Gelfand-Shilov spaces, cf. \cite{treves}.
\begin{theorem}\label{kernelT} There exists an isomorphism between the space of linear continuous maps $T$ from $S^s_r(\rd)$ to $(S^s_r)'(\rd)$ and $(S^s_r)'(\rdd)$, which associates to every $T$ a kernel $K_T\in (S^s_r)'(\rdd)$ such that
$$\la Tu,v\ra=\la K_T, v\otimes \bar{u}\ra,\quad \forall u,v \in S^s_r(\rd).$$
$K_T$ is called the kernel of $T$.
\end{theorem}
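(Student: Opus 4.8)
The plan is to deduce the statement from the abstract Schwartz kernel theorem for complete nuclear spaces, the only genuinely analytic input being that the two--variable Gelfand--Shilov space factors as a completed tensor product of one--variable spaces. Write $E=S^s_r(\rd)$ and let $E'=(S^s_r)'(\rd)$ be its strong dual. First I would record the two structural facts about $E$ on which everything rests: by Mityagin \cite{mitjagin} the space $E$ is nuclear, and it is complete (indeed a nuclear (DF)--space, the strong dual of the Fr\'echet space obtained from the ``for every $A,B$'' variant of \eqref{GFdef}). These are precisely the hypotheses under which the kernel theorem of \cite{treves} applies, and for such spaces separately continuous bilinear forms on $E\times E$ are automatically jointly continuous.

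The key step is the topological identification
\begin{equation*}
E\,\widehat{\otimes}\,E\;\cong\;S^s_r(\rdd),
\end{equation*}
where $\widehat{\otimes}$ denotes the completed tensor product (projective and injective topologies coincide by nuclearity). One inclusion is elementary: if $f,g\in S^s_r(\rd)$ then $f\otimes g$ obeys the two--variable version of \eqref{GFdef}, since those estimates separate across the two groups of variables, and bilinearity of $(f,g)\mapsto f\otimes g$ is continuous into $S^s_r(\rdd)$. For the reverse inclusion together with the equivalence of topologies I would use the characterization of Theorem \ref{simetria}(d): a function $F$ on $\rdd$ lies in $S^s_r(\rdd)$ iff $F$ and $\widehat F$ carry the double exponential bounds, of the form $|F|\lesssim e^{-h(|x|^{1/r}+|y|^{1/r})}$ and the dual estimate for $\widehat F$, and then expand $F$ with respect to a common Schauder basis of $E$ (for instance Hermite functions in the symmetric case $r=s$), whose coefficient array lies in the corresponding rapidly decreasing sequence space. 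The decay of this array is exactly the assertion that the expansion converges in $E\,\widehat{\otimes}\,E$, which yields both $S^s_r(\rdd)\subseteq E\,\widehat{\otimes}\,E$ and the matching of the topologies.

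With this in hand the conclusion is formal. For a complete nuclear space $E$, \cite{treves} provides natural isomorphisms
\begin{equation*}
\mathcal L\big(E,E'\big)\;\cong\;\mathcal B(E\times E)\;\cong\;\big(E\,\widehat{\otimes}\,E\big)',
\end{equation*}
where a continuous linear map $T$ corresponds to the jointly continuous bilinear form $(u,v)\mapsto\la Tu,v\ra$, which in turn restricts from a unique functional on $E\,\widehat{\otimes}\,E$ to elementary tensors. Combining this with the identification of the middle paragraph gives $\mathcal L(E,E')\cong (S^s_r)'(\rdd)$, and I would call $K_T$ the tempered ultra-distribution corresponding to $T$. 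The conjugate on $u$ in the stated identity is forced by the convention fixed in the Notations, where $\la\cdot,\cdot\ra$ extends the inner product and is therefore conjugate--linear in its second entry: matching the linearity of $\la Tu,v\ra$ in $u$ and its conjugate--linearity in $v$ against $\la K_T,\cdot\ra$ requires $v$ to enter the second slot linearly and $u$ through $\bar u$, so that $\la Tu,v\ra=\la K_T,v\otimes\bar u\ra$ for all $u,v\in E$, the ordering of the two factors being merely a labelling of the two copies of $\rd$. The main obstacle is the tensor--product identification of the second paragraph: the algebraic factorization is immediate, but matching the intrinsic topology of $S^s_r(\rdd)$ with the completed tensor--product topology --- controlling the exponential weights of Theorem \ref{simetria} uniformly under the expansion, and covering the non--symmetric range $r\neq s$ where an explicit Hermite basis is less convenient --- is where the real work lies.
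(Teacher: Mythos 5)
The paper gives no proof of this theorem at all: it simply records that $S^s_r(\rd)$ is nuclear (citing Mityagin) and refers to Treves for the abstract kernel theorem, and your sketch fleshes out precisely that argument (nuclearity and completeness of $E=S^s_r(\rd)$, the identification $E\,\widehat{\otimes}\,E\cong S^s_r(\rdd)$, and $\mathcal{L}(E,E')\cong(E\,\widehat{\otimes}\,E)'$), so it is essentially the same approach and is correct in outline. One parenthetical is inaccurate and worth fixing, though it does not affect the argument: $S^s_r(\rd)$ is a nuclear (DFS)-space because it is a countable inductive limit of Banach spaces with compact linking maps, but it is \emph{not} the strong dual of the ``for every $A,B$'' space $\Sigma^s_r(\rd)$ --- that dual is the strictly larger space $(\Sigma^s_r)'(\rd)$ of ultra-distributions, not a test-function space.
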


\subsection{Gabor frames and time-frequency representations.}  We recall the basic
concepts  of \tfa\ and  refer the  reader to \cite{grochenig} for the full
details. 
Consider a distribution $f\in\cS '(\rd)$
and a Schwartz function $g\in\cS(\rd)\setminus\{0\}$ (the so-called
{\it window}).
The short-time Fourier transform (STFT) of $f$ with respect to $g$ is
defined in \eqref{stft}.
 The  \stft\ is well-defined whenever  the bracket $\langle \cdot , \cdot \rangle$ makes sense for
dual pairs of function or (ultra-)distribution spaces, in particular for $f\in
\cS ' (\rd )$ and $g\in \cS (\rd )$, $f,g\in\lrd$, $f\in
(\Sigma_1^1) ' (\rd )$ and $g\in \Sigma_1^1 (\rd )$ or $f\in
(S^s_r) ' (\rd )$ and $g\in S^s_r (\rd )$.

Another time-frequency representation we shall use is the (cross-)Wigner distribution of $f,g\in \lrd$, defined as
\begin{equation}\label{cross-wigner}
W(f,g)\phas=\intrd f\Big(x+\frac t2\Big){\overline{g\Big(x-\frac t2\Big)}} e^{-2\pi i t\o}\,dt.
\end{equation}
If we set $\breve{g}(t)=g(-t)$, then the relation between cross-Wigner distribution and short-time Fourier transform is provided by
\begin{equation}\label{wignerSTFT}
W(f,g)\phas=2^d e^{4\pi i x\o}V_{\breve{g}}f(2x,2\o).
\end{equation}

\par

For the discrete description of function spaces and operators we use
Gabor frames. Let $\Lambda=A\zdd$  with $A\in GL(2d,\R)$ be a lattice
of the time-frequency plane.
 The set  of
time-frequency shifts $\G(g,\Lambda)=\{\pi(\lambda)g:\
\lambda\in\Lambda\}$ for a  non-zero $g\in L^2(\rd)$ is called a
Gabor system. The set $\G(g,\Lambda)$   is
a Gabor frame, if there exist
constants $A,B>0$ such that
\begin{equation}\label{gaborframe}
A\|f\|_2^2\leq\sum_{\lambda\in\Lambda}|\langle f,\pi(\lambda)g\rangle|^2\leq B\|f\|^2_2,\qquad \forall f\in L^2(\rd).
\end{equation}
 If \eqref{gaborframe} is satisfied, then there exists a dual window $\gamma\in L^2(\rd)$, such that $\cG(\gamma,\Lambda)$ is a frame, and every $f\in L^2(\rd)$ has the frame expansions
 \[
 f=\sum_{\lambda\in\Lambda}\langle f,\pi(\lambda)g\rangle\pi(\lambda)\gamma=\sum_{\lambda\in\Lambda}\langle f,\pi(\lambda)\gamma\rangle \pi(\lambda)g
 \]
 with unconditional convergence in $L^2(\rd)$.

 Eventually, we list some results about time-frequency analysis of Gelfand-Shilov functions, cf. \cite{medit,GZ,T2}:
 \begin{equation}\label{zimmermann1}
 f,g\in S^s_s(\rd),\ s\geq1/2 \Rightarrow V_g f\in S^s_s(\rdd),
 \end{equation}
 If $g\in S^s_s(\rd$), $s\geq1/2$, then
 \begin{equation}\label{zimmermann2}
   f\in S^s_s(\rd)\Leftrightarrow |V_g(f)(z)|\lesssim e^{-\epsilon |z|^{1/s}}\ \mbox{for some} \,\,\epsilon>0.
 \end{equation}

   Since we have not found a precise reference in the literature, we also recall and present the proof of the following properties concerning the Wigner distribution.
\begin{proposition} We have:
\begin{align}
\qquad f,g\in S^s_s(\rd),\ s\geq1/2&\Rightarrow W(f,g)\in S^s_s(\rdd),\label{wigner-gelfand}\\
\qquad f,g\in \Sigma_1^1(\rd) &\Rightarrow W(f,g)\in \Sigma^1_1(\rdd).\label{wigner-gelfand2}
 \end{align}
 \end{proposition}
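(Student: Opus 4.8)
The plan is to reduce everything to the known behaviour of the short-time Fourier transform, namely \eqref{zimmermann1} and \eqref{zimmermann2}, by invoking the characterization of $S^s_s$ through exponential decay of a function \emph{and} of its Fourier transform (Theorem \ref{simetria}(d)), thereby avoiding any direct estimate of the derivatives of $W(f,g)$. Throughout, observe first that $W(f,g)\in\cS(\rdd)$ whenever $f,g\in S^s_s(\rd)\subset\cS(\rd)$, so Theorem \ref{simetria} is applicable on $\rdd$; there both indices equal $s$, so the requirement $s+r\geq1$ reads $s\geq1/2$, which is exactly our hypothesis and also guarantees that the target space $S^s_s(\rdd)$ is nontrivial.

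First I would verify the decay of $W(f,g)$ itself. By \eqref{wignerSTFT} we have $|W(f,g)(z)|=2^d|V_{\breve g}f(2z)|$, and $S^s_s(\rd)$ is invariant under the reflection $g\mapsto\breve g$, since the defining estimate \eqref{GFdef} only involves moduli; hence $\breve g\in S^s_s(\rd)$. Then \eqref{zimmermann2} applied to the window $\breve g$ gives $|V_{\breve g}f(2z)|\lesssim e^{-\epsilon|2z|^{1/s}}=e^{-\epsilon 2^{1/s}|z|^{1/s}}$, which is precisely the first condition in Theorem \ref{simetria}(d) for $W(f,g)$.

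The main point is the decay of $\widehat{W(f,g)}$. Here the temptation to Fourier-transform the right-hand side of \eqref{wignerSTFT} directly is frustrated by the chirp factor $e^{4\pi i x\o}$, whose Fourier transform is again a chirp and turns the problem into a convolution against a unimodular kernel, from which decay is not visible. Instead I would compute $\mathcal F W(f,g)$ straight from the definition \eqref{cross-wigner}: integrating in $\o$ produces a delta in the $t$-variable, and after the resulting substitution one finds
$$
\mathcal F W(f,g)(\zeta_1,\zeta_2)=\int_{\rd} f\Big(u-\tfrac{\zeta_2}{2}\Big)\overline{g\Big(u+\tfrac{\zeta_2}{2}\Big)}\,e^{-2\pi i u\zeta_1}\,du .
$$
Via the change of variables $t=u-\zeta_2/2$ this integral equals $e^{-\pi i\zeta_1\zeta_2}V_gf(-\zeta_2,\zeta_1)$, so that $|\mathcal F W(f,g)(\zeta_1,\zeta_2)|=|V_gf(-\zeta_2,\zeta_1)|$. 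Since $|(-\zeta_2,\zeta_1)|=|(\zeta_1,\zeta_2)|$, a second application of \eqref{zimmermann2}, now with the window $g$, gives $|\mathcal F W(f,g)(\zeta)|\lesssim e^{-\epsilon|\zeta|^{1/s}}$, the second condition in Theorem \ref{simetria}(d). Combining the two decay estimates, Theorem \ref{simetria}(d) yields $W(f,g)\in S^s_s(\rdd)$, proving \eqref{wigner-gelfand}.

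For \eqref{wigner-gelfand2} the argument is verbatim the same, with $r=s=1$ and the quantifier ``there exist'' replaced by ``for every'' throughout Theorem \ref{simetria}; one only needs the $\Sigma_1^1$-versions of \eqref{zimmermann1}--\eqref{zimmermann2}, that is, that $f,g\in\Sigma_1^1(\rd)$ force $|V_gf(z)|\lesssim e^{-\epsilon|z|}$ for \emph{every} $\epsilon>0$, which are available in the cited references. The only genuine obstacle is thus the identification of $\mathcal F W(f,g)$ with $V_gf$ up to a unimodular factor and a linear coordinate change; once this is in hand, the exponential-decay characterization does all the work and no derivative estimates are required.
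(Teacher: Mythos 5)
Your argument is correct, but it takes a genuinely different route from the paper's. The paper proves the claim via the equivalence $a)\Leftrightarrow c)$ of Theorem \ref{simetria}: the moment bounds $\|z^\a W(g,g)\|_{L^\infty}\lesssim A^{|\a|}(\a!)^s$ come easily from \eqref{wignerSTFT} and \eqref{zimmermann1}, but the derivative bounds $\|\partial^\a W(g,g)\|_{L^\infty}\lesssim B^{|\a|}(\a!)^s$ require a fairly long double Leibniz expansion to control the chirp $e^{4\pi i x\o}$, the resulting polynomial factors, and the derivatives of $V_{\breve g}g$, with the combinatorial identities \eqref{2alfa} and $\sum_{\delta_1+\delta_2+\delta_3=\a}\a!/(\delta_1!\delta_2!\delta_3!)=3^{|\a|}$ doing the bookkeeping. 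You instead use the equivalence $a)\Leftrightarrow d)$, which trades all derivative estimates for a single structural fact: $\mathcal{F}W(f,g)$ is the cross-ambiguity function, i.e.\ $\mathcal{F}W(f,g)(\zeta_1,\zeta_2)=e^{-\pi i\zeta_1\zeta_2}V_gf(-\zeta_2,\zeta_1)$ (your computation of this identity is correct), so that \emph{both} required decay conditions in Theorem \ref{simetria}$(d)$ reduce to the STFT decay \eqref{zimmermann2}, once for the window $\breve g$ and once for $g$. This is shorter and cleaner than the paper's computation, and it handles the general cross-Wigner $W(f,g)$ directly rather than the diagonal case $W(g,g)$; what it gives up is the explicit derivative estimate on $W(g,g)$, which is of some independent interest but not needed for the statement. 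Your reliance on the $\Sigma_1^1$-version of \eqref{zimmermann2} for \eqref{wigner-gelfand2} is at the same level of rigor as the paper's own ``follows by similar techniques,'' and is indeed covered by the cited references.
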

\begin{proof}   We prove  \eqref{wigner-gelfand}, formula \eqref{wigner-gelfand2} follows by similar techniques.
The assumption $g\in S^s_s(\rd)$ trivially implies $\breve{g}\in S^s_s(\rd)$ and \eqref{zimmermann1} proves that the short-time Fourier transform $V_{\breve{g}}g$ is in $S^s_s(\rdd)$. To prove that $\Phi=W(g,g)\in S^s_s(\rdd)$, we use the equivalence $a)\Leftrightarrow c)$ in Theorem \ref{simetria} and  the connection  between $V_{\breve{g}}g$ and  the Wigner distribution in \eqref{wignerSTFT}. First, let us prove the first inequality in Theorem \ref{simetria} c). For $z=\phas\in\rdd$, we have
\begin{align*} |z^\a W(g,g)(z)|&=2^d |z^\a V_{\breve{g}}g(2 z)|=2^{d-|\a|}|(2 z )^\a V_{\breve{g}}g(2 z)|\\
&\leq 2^{d-|\a|}\|z^\a V_{\breve{g}}g\|_{L^\infty}\lesssim 2^{d-|\a|}A^{|\a|}(\a!)^s\lesssim \left(\frac A 2\right)^{|\a|}(\a!)^s.
\end{align*}
The second inequality in Theorem \ref{simetria} c) requires more computations but the techniques are the same as before.
Using Leibniz formula  for the $x$-derivatives and the $\o$-derivatives separately,
\begin{align*} \partial_\o^\a \partial_x^\b W(g,g)\phas&= \partial_\o^\a (\partial_x^\b (2^d e^{4\pi i x\o}V_{\breve{g}}g(2x,2\o))\\&=2^{d}\partial_\o^\a\left(\sum _{\gamma \leq \b} {\b \choose \gamma}
(4\pi i \o)^{\b - \gamma}  e^{4\pi i x\o} 2^{|\gamma|}\partial_x^\gamma(V_{\breve{g}}g)(2x,2\o)\right)\\
&= 2^{d}\sum _{\gamma \leq \b} {\b \choose \gamma}\sum _{\delta_1+\delta_2+\delta_3= \a\atop \delta_1\leq \beta-\gamma} \frac{\a!}{\delta_1!\delta_2!\delta_3!}
\partial^{\delta_1}_\o((4\pi i \o)^{\b - \gamma})   (4\pi i x)^{\delta_2}e^{4\pi i x\o}\\
&\qquad\qquad\qquad  \times\,\,2^{|\gamma|+|\delta_3|} \partial_\o^{\delta_3}\partial_x^\gamma(V_{\breve{g}}g)(2x,2\o).
\\
&= 2^{d}\sum _{\gamma \leq \b} {\b \choose \gamma}\sum _{\delta_1+\delta_2+\delta_3= \a\atop\delta_1\leq\beta-\gamma} 2^{|\gamma|+|\delta_3|} (4\pi i)^{|\beta-\gamma|+|\delta_2|}\frac{\a!}{\delta_1!\delta_2!\delta_3!} \frac{(\beta-\gamma)!}{(\beta-\gamma-\delta_1)!}\\ &\,\qquad\qquad\qquad
\times\o^{\b - \gamma-\delta_1}   x^{\delta_2}e^{4\pi i x\o}
 \partial_\o^{\delta_3}\partial_x^\gamma(V_{\breve{g}}g)(2x,2\o).
\end{align*}
Now, we have
\begin{align*}|\o^{\b-\gamma-\delta_1} x^{\delta_2} \partial_\o^{\delta_3}\partial_x^\gamma(V_{\breve{g}}g)(2x,2\o)|&\lesssim A^{|\alpha|+|\beta|}((\beta-\gamma-\delta_1+\delta_2+\delta_3+\gamma)!)^s\\&
= A^{|\alpha|+|\beta|}((\beta+\alpha-2\delta_1)!)^s,
\end{align*}
where we used $\delta_2+\delta_3=\a-\delta_1$. Since from \eqref{2alfa}
\[
\frac{(\beta-\gamma)!}{(\beta-\gamma-\delta_1)!}\leq 2^{|\beta-\gamma|}\delta_1!\leq 2^{|\beta-\gamma|}((2\delta_1)!)^{1/2}\leq 2^{|\beta-\gamma|}((2\delta_1)!)^{s},
\]
together with $\sum _{\gamma \leq \b} {\b \choose \gamma}=2^{|\b|}$ and $\sum _{\delta_1+\delta_2+\delta_3= \a} \frac{\a!}{\delta_1!\delta_2!\delta_3!}= 3^{|\a|}$,
we obtain the desired estimate
$$|\partial_\o^\a \partial_x^\b W(g,g)\phas|\lesssim C^{|\a+\b|} ((\a+\b)!)^{s}.$$
\end{proof}

\subsection{Modulation Spaces}\label{moddef}
Modulation spaces measure the decay of the STFT on the time-frequency (phase space) plane and were introduced by Feichtinger in the 80's \cite{F1}, for weight of sub-exponential growth at infinity. The study of weights of exponential growth at infinity was developed in \cite{medit,T2}.\par
\emph{Weight Functions.}  In the sequel $v$ will always be a
continuous, positive,  even, submultiplicative   function
(submultiplicative weight), i.e., $v(0)=1$, $v(z) =
v(-z)$, and $ v(z_1+z_2)\leq v(z_1)v(z_2)$, for all $z,
z_1,z_2\in\Renn.$
Submultiplicativity implies that $v(z)$ is \emph{dominated} by an exponential function, i.e.
\begin{equation} \label{weight}
 \exists\, C, k>0 \quad \mbox{such\, that}\quad  1\leq v(z) \leq C e^{k |z|},\quad z\in \rdd.
\end{equation}

For instance,  weights of the form
\begin{equation} \label{BDweight}
v(z) =   e^{s|z|^b} (1+|z|)^a \log ^r(e+|z|)
\end{equation}
 where $a,r,s\geq 0$, $0\leq b \leq 1$, satisfy the
above conditions.\par

We denote by $\mathcal{M}_v(\rdd)$ the space of $v$-moderate weights on $\rdd$;
these  are positive and measurable functions $m$ satisfying $m(z+\zeta)\leq C
v(z)m(\zeta)$ for every $z,\zeta\in\rdd$.

\begin{definition}  \label{prva}
Given  $g\in\Sigma^1_1(\rd)$, a  weight
function $m\in\cM _v(\rdd)$, and $1\leq p,q\leq
\infty$, the {\it
  modulation space} $M^{p,q}_m(\Ren)$ consists of all tempered
ultra-distributions $f\in(\Sigma^1_1)' (\rd) $ such that $V_gf\in L^{p,q}_m(\Renn )$
(weighted mixed-norm spaces). The norm on $M^{p,q}_m(\rd)$ is
\begin{equation}\label{defmod}
\|f\|_{M^{p,q}_m}=\|V_gf\|_{L^{p,q}_m}=\left(\int_{\Ren}
  \left(\int_{\Ren}|V_gf(x,\o)|^pm(x,\o)^p\,
    dx\right)^{q/p}d\o\right)^{1/q}  \,
\end{equation}
(obvious changes if $p=\infty$ or $q=\infty$).
\end{definition}
For $f,g\in \Sigma^1_1(\rd)$ the above integral is convergent and thus $\Sigma^1_1(\rd)\subset M^{p,q}_m(\rd) $, $1\leq p,q \leq\infty$, cf.\ \cite{medit}, with dense inclusion when $p,q<\infty$, cf.\ \cite{elena07}. When $p=q$, we simply write $M^{p}_m(\rd)$ instead of $M^{p,p}_m(\rd)$. The spaces $M^{p,q}_m(\rd)$ are Banach spaces and every nonzero $g\in M^{1}_v(\rd)$ yields an equivalent norm in \eqref{defmod} and so $M^{p,q}_m(\Ren)$ is independent on the choice of $g\in  M^{1}_v(\rd)$.

We observe that  these  properties of modulation spaces do depend on the fact that the weight functions involved have at most exponential growth at infinity. Indeed, as well known, if we consider super exponential weights at infinity, say, e.g. $m(z)=e^{k |z|^b}$, with $k>0$ and $1<b\leq 2$, then the related modulation spaces $M^{p,q}_m(\rd)$ can still be defined by taking Gelfand-Shilov windows, but their definition depends on the choice of the window $g$, because of the loss of sub-multiplicativity of the weight.
\subsection{Ultra-Modulation Spaces}
In this section we present a new definition of modulation spaces, which considers also weights of super-exponential growth at infinity (of a very particular form).
For related constructions, we refer to \cite{elena07,GZ} and more recently to \cite{ToftGS}.
Let us first introduce  the weights:
\begin{equation}\label{pesiw}
w_{s,\eps}(z):=e^{\eps|z|^{\frac1s}},\quad z \in\rd,\,\, s\geq 1/2,\,\, \eps>0,
\end{equation}
and the weight class $\cN(\rdd)=\{
m_{s,\eps}(x,\o)=(1\otimes
w_{s,\eps})(x,\o)=w_{s,\eps}(\o),\,x,\xi\in\rd,\,\eps>0,s\geq
1/2\}$, which we may identify with the set $(0,+\infty)\times [1/2,+\infty)$ of the corresponding $\eps,s$ parameters.
Thus, a weight $m_{s,\eps}\in\cN(\rdd)$ grows faster than
exponentially at infinity in the frequency variable
whenever $1/2\leq s<1$. Notice that the bound $s=1/2$ is
admitted. We use the class $\cN(\rdd)$ as weight class for
modulation spaces. We limit our study to weights in the
frequency variable for simplicity. On the other hand, observe that a limit in
enlarging the definition to the space variables  is imposed by
Hardy's theorem:
 if $m(z)\geq C e^{c |z|^2}$, for $z=(x,\o)\in\rdd$ and some $c>\pi/2$, then the corresponding \modsp s  are trivial \cite{GZ}.

\begin{definition}\label{defmodnorm}
Let $\eps>0$, $s\geq1/2$ and  $m_{s,\eps}\in \cN(\rdd)$. Consider  a non-zero \emph{window} function $g$ in $\cS^{s}_{s}(\rd)$. For  $1\leq p,q\leq
\infty$,  we define
$M^{p,q}_{m_{s,\eps},g}(\rd)$  the subspace of $f\in (\Sigma^{1}_{1})'(\rd)$ such that the integrals in \eqref{defmod} are finite (with obvious changes if either $p=\infty$ or $q=\infty$).
\end{definition}

\noindent
Notice that:\\
(i) If $s>1/2$, $f,g \in\cS ^{1/2}_{1/2}(\rd )$, the  integral in \eqref{defmod} is
convergent thanks to \eqref{zimmermann1} and Theorem \ref{simetria}, item $d)$. Indeed, we can find
 $h>0$   such that $\|V_gf  e^{h |\cdot|^{2}}\|_{L^\infty} < \infty$ and
\begin{eqnarray*}
&&\int_{\Ren}
  \left ( \int_{\Ren}|V_gf\phas|^p m_{s,\eps}(x,\o)^p\,
    dx\right)^{q/p}d\o\\
   &&\quad\leq C\,\|(V_gf) e^{h |\cdot|^2}\|_{L^\infty}
   \int_{\Ren}
  \left( \int_{\Ren}|m_{s,\eps}(x,\o)|^p e^{-h p |\phas|^2}\,
    dx\right) ^{q/p}\!\!\!  d\o < \infty.
\end{eqnarray*}
\noindent (ii) For $m_{s,\eps}\in \cM_v(\rdd)$, hence
$s\geq 1$, then $M^{p,q}_{m_{s,\eps},g}(\rd)$ is the
subspace of ultra-distribution
$(\Sigma^{1}_{1})'(\rd)$ defined in Definition \ref{prva}. So we come back to the classical modulation spaces. This justify the same notation.\\
\noindent
(iii) The definition of $M^{p,q}_{m_{s,\eps},g}(\rd)$ may depend on the choice of the window
function $g$. However,  if
$s\geq1$, the definition of $M^{p,q}_{m_{s,\eps},g}(\rd)$ does not depend on
$g$:  the class of admissible windows
can be enlarged to $M^1_{m_{s,\eps}}(\rd)$ \cite[Proposition 1]{elena07}, so we recapture the standard definition \ref{prva}.

\par
\subsection{Time-frequency analysis of Gevrey-analytic and ultra-analytic symbols}\label{tfc} Here we present the results obtained in \cite[Section 3]{CNR12}, where  the smoothness and the growth of a function $f$ on $\rd$ is characterized in terms of the decay of its STFT $V_g f$, for a suitable window $g$.
\begin{theorem}\label{teo1}
Consider $s>0$, $m\in \mathcal{M}_v(\rd)$, $g\in M^1_{v\otimes 1}(\rd)\setminus\{0\}$ such that there exists $C_g>0$,
\begin{equation}\label{finestra}
\|\partial^\a g\|_{L^1_v(\rd)}\lesssim C_g^{|\a|}(\a!)^s,\quad\a\in\bN^d.
\end{equation}
For $f\in\cC^\infty(\rd)$ the following conditions are equivalent:\\
(i) There exists a constant $C_f>0$ such that
\begin{equation}\label{smoothf}
|\partial^\a f(x)|\lesssim m(x)C_f^{|\a|}(\a!)^s,\quad x\in\rd,\,\a\in\bN^d.
\end{equation}
\noindent
(ii)  There exists a constant $C_{f,g}>0$ such that
\begin{equation}\label{STFTf}
|\o^\a V_gf\phas|\lesssim m(x)C_{f,g}^{|\a|}(\a!)^s,\quad \phas\in\rdd,\,\a\in\bN^d.
\end{equation}
(iii)  There exists a constant $\eps>0$ such that
\begin{equation}\label{STFTeps}
|V_gf\phas|\lesssim m(x)e^{-\eps|\o|^{\frac1s}},\quad \phas\in\rdd,\,\a\in\bN^d.
\end{equation}
\end{theorem}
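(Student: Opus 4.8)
The plan is to establish the cycle (i) $\Rightarrow$ (ii) $\Leftrightarrow$ (iii) $\Rightarrow$ (i), where the equivalence (ii) $\Leftrightarrow$ (iii) is essentially a restatement of Proposition \ref{equi}. Indeed, for each fixed $x$ I would apply that proposition with $r=s$ to the continuous function $\o\mapsto V_gf\phas/m(x)$, so that \eqref{STFTf} and \eqref{STFTeps} correspond exactly to \eqref{powerdecay} and \eqref{eqn:A.7}. The only point to verify is that the passage is uniform in $x$, and this is guaranteed by Remark \ref{linkconst}, since there the constant relating $\eps$ and $C$ (and the implicit constant in $\lesssim$) depends only on $s,d$ and on the other constant, not on the particular function. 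Thus I concentrate on the two nontrivial implications.

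For (i) $\Rightarrow$ (ii) I would exploit that multiplication by $\o^\a$ corresponds to differentiation under the integral in \eqref{stft}. Writing $\o^\a e^{-2\pi i\o t}=(-2\pi i)^{-|\a|}\partial_t^\a e^{-2\pi i\o t}$ and integrating by parts $|\a|$ times moves the derivatives onto $f(t)\overline{g(t-x)}$; the boundary terms vanish because $g$ and its derivatives decay exponentially by \eqref{finestra}. The Leibniz formula then gives
\begin{equation*}
|\o^\a V_gf\phas|\leq (2\pi)^{-|\a|}\sum_{\beta\leq\a}\binom{\a}{\beta}\int_{\rd}|\partial^{\a-\beta}f(t)|\,|\partial^\beta g(t-x)|\,dt.
\end{equation*}
Inserting the hypothesis \eqref{smoothf}, bounding $m(t)\leq C\,v(t-x)m(x)$ by $v$-moderateness, and recognizing the remaining integral as $\|\partial^\beta g\|_{L^1_v}\lesssim C_g^{|\beta|}(\beta!)^s$ reduces everything to the combinatorial estimate
$$\binom{\a}{\beta}\big((\a-\beta)!\big)^s(\beta!)^s=(\a!)^s\binom{\a}{\beta}^{\,1-s}\leq (\a!)^s\binom{\a}{\beta},$$
valid for every $s>0$ since $\binom{\a}{\beta}\geq1$. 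This pulls out $(\a!)^s$ and leaves $\sum_{\beta\leq\a}\binom{\a}{\beta}C_f^{|\a-\beta|}C_g^{|\beta|}=(C_f+C_g)^{|\a|}$, yielding \eqref{STFTf}.

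For (iii) $\Rightarrow$ (i) I would reconstruct $f$ from its STFT by the (weak) inversion formula with window $\gamma=g$,
$$f=\frac{1}{\|g\|_2^2}\int_{\rdd}V_gf\phas\,\pi\phas g\,dx\,d\o,$$
differentiate under the integral, and apply Leibniz to $\partial_t^\a\big(e^{2\pi i\o t}g(t-x)\big)$. The factor $\o^{\a-\beta}$ so produced is absorbed using \eqref{STFTeps}: writing $e^{-\eps|\o|^{1/s}}=e^{-\frac\eps2|\o|^{1/s}}\,e^{-\frac\eps2|\o|^{1/s}}$ and applying Proposition \ref{equi} to the first factor gives $|\o^{\a-\beta}|\,e^{-\frac\eps2|\o|^{1/s}}\lesssim C^{|\a-\beta|}((\a-\beta)!)^s$, whence
$$|\o^{\a-\beta}V_gf\phas|\lesssim m(x)\,C^{|\a-\beta|}((\a-\beta)!)^s\,e^{-\frac\eps2|\o|^{1/s}}.$$
The surviving weight $e^{-\frac\eps2|\o|^{1/s}}$ makes the $\o$-integral converge; the $x$-integral $\int_{\rd} m(x)|\partial^\beta g(t-x)|\,dx$ is then treated exactly as in the forward direction by $v$-moderateness and \eqref{finestra}, and the same combinatorial bound recovers \eqref{smoothf}.

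The main obstacle is this last step: one must justify the inversion formula for an $f$ that is merely a smooth function of controlled growth (an element of $(\Sigma_1^1)'$ rather than of $L^2$), and, more delicately, guarantee absolute convergence of the reconstruction integral together with the legitimacy of differentiating under it. This is precisely where the frequency decay \eqref{STFTeps} is indispensable: it compensates the growth $m(x)$ in the space variable and supplies the $\o$-integrability that the polynomial-type bound \eqref{STFTf} alone would not provide. The remaining work is the bookkeeping of the Gevrey constants through the combinatorial estimates, so that a single geometric constant emerges in each conclusion.
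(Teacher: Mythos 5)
Your argument is correct and coincides with the standard proof: the paper states Theorem \ref{teo1} without proof, deferring to \cite[Section 3]{CNR12}, and the argument there runs exactly along your lines — integration by parts together with $v$-moderateness and \eqref{finestra} for (i)$\Rightarrow$(ii), Proposition \ref{equi} applied uniformly in $x$ (via Remark \ref{linkconst}) for (ii)$\Leftrightarrow$(iii), and the STFT inversion formula with differentiation under the integral, made legitimate by the absolute convergence that \eqref{STFTeps} supplies, for the converse. Your combinatorial identity $\binom{\alpha}{\beta}\big((\alpha-\beta)!\big)^s(\beta!)^s=(\alpha!)^s\binom{\alpha}{\beta}^{1-s}\leq(\alpha!)^s\binom{\alpha}{\beta}$ is precisely what is needed to treat all $s>0$ (sub- and super-exponential cases) simultaneously, so no gap remains beyond the routine justification you already flag.
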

We say that the function $f$ is Gevrey if $s>1$, analytic if $s=1$, and ultra-analytic when $s<1$.
\begin{remark} We observe that the assumption $m\in \mathcal{M}_v(\rd)$ in the previous theorem is essential to obtain the equivalence: indeed in the proof we exploit the $v$-moderateness of $m$.\par
\end{remark}
For simplicity, from now on we shall assume $m=v=1$.\par
A natural question is whether we may find window functions satisfying \eqref{finestra}. To find the answer, we recall the following characterization of Gelfand-Shilov spaces.
\begin{proposition}\label{pro3.2} Let $g\in\cS(\rd)$. We have $g\in S^s_r(\rd)$, with $s,r>0$, $r+s\geq1$, if and only if there exist constants $A>0$, $\epsilon>0$ such that
$$|\partial^\a g(x)|\lesssim A^{|\a|} (\a!)^s e^{-\eps|x|^{\frac1r}},\quad x\in\rd,\,\,\a\in\bN^d.
$$
We have $g\in \Sigma^1_1(\rd)$ if and only if, for every $A>0$, $\epsilon>0$,
$$|\partial^\a g(x)|\lesssim A^{|\a|} \a! e^{-\eps|x|},\quad \, x\in\rd,\,\,\a\in\bN^d.
$$
\end{proposition}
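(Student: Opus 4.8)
The plan is to derive both equivalences from Proposition \ref{equi}, applied separately to each derivative $h:=\partial^\a g$, the decisive issue being to keep the constants produced by that proposition \emph{uniform} in the derivative multiindex $\a$. Throughout, the free multiindex appearing in the power bound \eqref{powerdecay} and the exponential bound \eqref{eqn:A.7} will be played by the multiplication index, which I denote $\gamma$ and which is paired with the exponent $r$; the derivative index $\a$ is paired with $s$, exactly as in the defining inequality \eqref{GFdef}. Since $g\in\cS(\rd)$ is assumed from the outset, membership in $S^s_r(\rd)$ amounts precisely to the derivative estimates, so it suffices to pass between \eqref{GFdef} and the claimed mixed bound.

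Assume first $g\in S^s_r(\rd)$, so \eqref{GFdef} gives $A,B>0$ with $|x^\gamma\partial^\a g(x)|\lesssim A^{|\gamma|}B^{|\a|}(\gamma!)^r(\a!)^s$ for all $\gamma,\a$. Fixing $\a$ and setting $h_\a:=\partial^\a g/(B^{|\a|}(\a!)^s)$, the function $h_\a$ satisfies the power bound \eqref{powerdecay} (in the variable $\gamma$, exponent $r$) with base constant $A$ and an implicit constant in $\lesssim$ that are both independent of $\a$. By the implication (ii)$\Rightarrow$(i) of Proposition \ref{equi}, together with the quantitative Remark \ref{linkconst}, we obtain $|h_\a(x)|\lesssim e^{-\eps|x|^{1/r}}$, where $\eps$ may be taken to be any number $<r(dA)^{-1/r}$ and where the implicit constant depends only on the one for $h_\a$; in particular both are independent of $\a$. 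Multiplying back by $B^{|\a|}(\a!)^s$ yields $|\partial^\a g(x)|\lesssim B^{|\a|}(\a!)^s e^{-\eps|x|^{1/r}}$, which is the asserted estimate. Conversely, suppose $|\partial^\a g(x)|\lesssim A^{|\a|}(\a!)^s e^{-\eps|x|^{1/r}}$. Normalizing again, $h_\a/(A^{|\a|}(\a!)^s)$ satisfies \eqref{eqn:A.7} with the same $\eps$ and a $\a$-uniform implicit constant, so (i)$\Rightarrow$(ii) of Proposition \ref{equi} and Remark \ref{linkconst} give $|x^\gamma\partial^\a g(x)|\lesssim C^{|\gamma|}(\gamma!)^r A^{|\a|}(\a!)^s$ with $C=(rd/\eps)^r$ independent of $\a$. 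This is exactly \eqref{GFdef} with base constants $C$ and $A$, so $g\in S^s_r(\rd)$.

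The $\Sigma^1_1(\rd)$ statement follows by repeating these two steps with $r=s=1$ and merely tracking the quantifiers. In the definition \eqref{sigmadef} both base constants range over all of $(0,\infty)$, and for $r=1$ the correspondence of Remark \ref{linkconst} reads: a power constant $C$ yields decay rate any $\eps<(dC)^{-1}$, while a decay rate $\eps$ yields power constant $C=d/\eps$. Both maps are monotone and surject onto $(0,\infty)$, so ``for every $A,B$'' in \eqref{sigmadef} is equivalent to ``for every $A,\eps$'' in the claimed bound: in the forward direction, to reach a prescribed $\eps$ one selects $A<(d\eps)^{-1}$ in \eqref{sigmadef} and keeps the derivative constant $B$ as the target, while in the converse one selects $\eps\ge d/A$ so that $C=d/\eps\le A$. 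The main, and essentially only, obstacle is precisely this bookkeeping: one must verify that applying Proposition \ref{equi} to each $h_\a=\partial^\a g$ returns a decay rate $\eps$ and an implicit constant that do not degenerate as $|\a|\to\infty$, which is guaranteed by the explicit, $\a$-free constant relations recorded in Remark \ref{linkconst}.
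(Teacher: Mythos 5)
Your proof is correct. The paper itself states Proposition \ref{pro3.2} without proof (it is ``recalled'' from the literature), and your derivation — applying Proposition \ref{equi} to each normalized derivative $\partial^\a g/(B^{|\a|}(\a!)^s)$ and invoking the explicit, $\a$-independent constant relations of Remark \ref{linkconst} to keep the decay rate and implicit constants uniform in $\a$ — is exactly the natural argument the paper is implicitly relying on, including the quantifier bookkeeping needed for the $\Sigma^1_1(\rd)$ case.
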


Hence every $g\in S^s_r(\rd)$ with $s>0$, $0<r<1$, $s+r\geq 1$, satisfies \eqref{finestra} for every submultiplicative weight $v$ (see \eqref{weight}). The same holds true if $g\in\Sigma_1^1(\rd)$ and $s\geq1$.

\section{Almost Diagonalization for Pseudodifferential operators}\label{section4}
In this section we first present the almost diagonalization for pseudodifferential operators having (ultra-)analytic symbols obtained in \cite[Section 4]{CNR12}. This result can be seen as an extension of the almost diagonalization for pseudodifferential operators obtained in \cite{GR}, where  only the Gevrey-analytic case  was  discussed.\par
Secondly, we exhibit a new characterization involving ultra-modulation spaces.\par
Recall the Weyl form $\sigma^w$ of a  pseudodifferential operator (so-called Weyl operator or Weyl transform) with symbol $\sigma\phas$ on $\rdd$,  formally defined by
\begin{equation}\label{Weyl}
\sigma^w f (x)=\intrd \sigma\left(\frac{x+y}2,\eta\right)e^{2\pi i (x-y)\eta}f(y)\,dy d\eta.
\end{equation}
Using the Kernel Theorem  for Gelfand-Shilov spaces (Theorem \ref{kernelT}), we have a characterization of linear continuous operators $T: S^s_r(\rd) \to (S^s_r)'(\rd)$. In particular, any such operator $T$ can be represented as a pseudodifferential  operator in the Weyl form, with $\sigma\in(S^r_s)'(\rdd)$. Thus we shall exhibit our results for Weyl operators.

It was proved in \cite{charly06,GR} that Gabor frames allow to discretize  a continuous operator from
 $\cS(\rd)$ to $\cS'(\rd)$ into  an infinite matrix that captures the  properties of the original operator.
 In particular, the authors consider symbols in the modulation spaces $M^{\infty,q}_{1\otimes v\circ j^{-1}}$, where $v$ is a continuous
and submultiplicative weight function on $\rdd$ and  $j$ is  the rotation on $\rdd$:
$$j(z_1,z_2)=(z_2,-z_1),\quad(z_1,z_2)\in\rdd.$$
Fix a Banach algebra of sequences $\ell^q_v(\Lambda)$, where  $\Lambda$ is a lattice on $\rdd$ with relatively compact fundamental domain $\mathcal{Q}$  containing the origin. Then a function $H\in L^\infty_{loc}(\rdd)$ belongs to the Wiener amalgam space $W(\ell^q_v)(\rdd)$ if the sequence $h(\lambda):=\mbox{ess\,sup}_{u\in\lambda+\mathcal{Q}} H(u)$ is in $\ell^q_v(\Lambda)$.
The almost diagonalization for pseudodifferential operators in  \cite{charly06,GR} can be extended easily to modulation spaces with exponential weights $v(z)=e^{c|z|}$ \cite{medit}. Indeed one can use the same pattern of \cite[Theorem 4.1]{GR}, provided that the symbol  space of distributions $\cS'(\rdd)$ is replaced by the space of ultra-distributions $(\Sigma_1^1)'(\rdd)$. Thus  we rephrase \cite[Theorem 4.1]{GR} in our context as follows.

\begin{theorem}[\cite{charly06,GR}]\label{CR1}  Assume  that $\G(g,\Lambda)$ is a
 frame for $L^2(\rd)$ with  $g\in M^1_v(\rd)$.  Then the
 following statements  are equivalent for $\sigma\in(\Sigma^1_1)'(\rdd)$:
\par {\rm
(i)} $\sigma\in M^{\infty,q}_{1\otimes v\circ j^{-1}}(\rdd)$.\par
{\rm (ii)} There exists a function $H\in W(\ell^q_v)(\rdd)$  such that
\begin{equation}\label{CC1} |\langle \sigma^w \pi(z)
g,\pi(w)g\rangle|\lesssim H(w-z),\qquad \forall
w,z\in\rdd.
\end{equation}
{\rm (iii)} There exists a sequence $h\in \ell^q_v(\Lambda)$ such that
\begin{equation}\label{CC2} |\langle \sigma^w \pi(\lambda)
g,\pi(\mu)g\rangle|\lesssim h(\mu-\lambda),\qquad \forall
\lambda,\mu\in \Lambda.
\end{equation}
\end{theorem}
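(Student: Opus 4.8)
The plan is to reduce every one of the three conditions to a statement about the short-time Fourier transform of the symbol $\sigma$, computed with the window $\Phi=W(g,g)$, and then to use that $\Phi$ is a good window. The cornerstone is the by now classical identity that expresses the Gabor matrix of a Weyl operator through the STFT of its symbol: starting from $\langle\sigma^w f,h\rangle=\langle\sigma,W(h,f)\rangle$ and using the covariance of the cross-Wigner distribution under time-frequency shifts, one gets, up to an inessential phase factor,
\[
|\langle\sigma^w\pi(z)g,\pi(w)g\rangle|=\Big|V_\Phi\sigma\Big(\tfrac{w+z}{2},\,j(w-z)\Big)\Big|,\qquad z,w\in\rdd.
\]
Two features of this formula dictate the shape of statement (i): the ``time'' variable $\tfrac{w+z}{2}$ of $V_\Phi\sigma$ carries the trivial weight, whereas the ``frequency'' variable is the \emph{rotated} difference $j(w-z)$; since $v$ is even, the weight $1\otimes v\circ j^{-1}$ evaluated there equals $v(w-z)$, which is exactly the weight governing the off-diagonal decay in (ii) and (iii). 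I would first record that $g\in M^1_v(\rd)$ forces $\Phi=W(g,g)\in M^1_v(\rdd)$ (see \cite{grochenig}), so that $\Phi$ is an admissible window and $V_\Phi\sigma$ does compute the $M^{\infty,q}_{1\otimes v\circ j^{-1}}$-norm of $\sigma$.

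For the equivalence (i)$\Leftrightarrow$(ii) I read the identity in both directions. If $\sigma\in M^{\infty,q}_{1\otimes v\circ j^{-1}}$, then $\zeta\mapsto\sup_u|V_\Phi\sigma(u,\zeta)|$ lies in $L^q_{v\circ j^{-1}}$, and composing with $j$ (using $v$ even) shows that $H(\eta):=\sup_u|V_\Phi\sigma(u,j\eta)|$ lies in $L^q_v$ and dominates the Gabor matrix through the identity. To upgrade this $L^q_v$ bound to the local-sup norm of $W(\ell^q_v)$ required by (ii), I would invoke the self-reproducing inequality $|V_\Phi\sigma|\le\|\Phi\|_2^{-2}\,|V_\Phi\sigma|\ast|V_\Phi\Phi|$, where $V_\Phi\Phi$ is an $L^1$ kernel with $v$-moderate decay because $\Phi\in M^1_v$: convolution with such a kernel turns almost-everywhere $L^q$ control into genuine local boundedness, the moderateness of $v$ absorbing the translations. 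Conversely, (ii)$\Rightarrow$(i) is soft: $W(\ell^q_v)\hookrightarrow L^q_v$, and feeding $|V_\Phi\sigma(u,j\eta)|\le H(\eta)$ (valid for every $u$ by choosing $w+z=2u$) into the norm, with the change of variables $\zeta=j\eta$, yields $\|\sigma\|_{M^{\infty,q}_{1\otimes v\circ j^{-1}}}\lesssim\|H\|_{L^q_v}<\infty$. The implication (ii)$\Rightarrow$(iii) is then merely the sampling $h(\lambda):=\mbox{ess\,sup}_{u\in\lambda+\mathcal{Q}}H(u)$, which lies in $\ell^q_v(\Lambda)$ by the definition of $W(\ell^q_v)$.

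The one substantial implication is (iii)$\Rightarrow$(ii), where the frame hypothesis finally enters. Let $\gamma$ be a dual window, so that $f=\sum_\lambda\langle f,\pi(\lambda)\gamma\rangle\pi(\lambda)g$ for every $f$. Expanding both $\pi(z)g$ and $\pi(w)g$ in this way and inserting the expansions into the matrix coefficient gives
\[
\langle\sigma^w\pi(z)g,\pi(w)g\rangle=\sum_{\lambda,\mu\in\Lambda}\langle\pi(z)g,\pi(\lambda)\gamma\rangle\,\overline{\langle\pi(w)g,\pi(\mu)\gamma\rangle}\,\langle\sigma^w\pi(\lambda)g,\pi(\mu)g\rangle.
\]
Using $|\langle\pi(z)g,\pi(\lambda)\gamma\rangle|=|V_\gamma g(\lambda-z)|$ and the hypothesis $|\langle\sigma^w\pi(\lambda)g,\pi(\mu)g\rangle|\le h(\mu-\lambda)$ from (iii), this is bounded by
\[
|\langle\sigma^w\pi(z)g,\pi(w)g\rangle|\le\sum_{\lambda,\mu\in\Lambda}|V_\gamma g(\lambda-z)|\,h(\mu-\lambda)\,|V_\gamma g(\mu-w)|,
\]
a double convolution of $h$ against two copies of the sampled $V_\gamma g$. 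Provided $V_\gamma g$ has $L^1_v$- (equivalently $W(\ell^1_v)$-) decay, the Young-type inclusion $\ell^1_v\ast\ell^q_v\ast\ell^1_v\hookrightarrow\ell^q_v$, which holds because $v$ is submultiplicative, together with the local control of $V_\gamma g$, produces the desired $H\in W(\ell^q_v)$ with $|\langle\sigma^w\pi(z)g,\pi(w)g\rangle|\le H(w-z)$, i.e.\ (ii).

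The main obstacle is precisely the requirement $V_\gamma g\in L^1_v$, that is, that the dual window can be chosen in $M^1_v(\rd)$. This is not a formal consequence of frame theory; it is the non-commutative Wiener lemma of Gr\"ochenig--Leinert, which ensures that the canonical dual of a Gabor frame with window in $M^1_v$ again belongs to $M^1_v$ (see \cite{grochenig}). Granted this, everything reduces to the bookkeeping of the convolution and sampling estimates above. The only genuinely new point with respect to \cite{GR} is that $\sigma$ is now a tempered ultra-distribution in $(\Sigma^1_1)'(\rdd)$ and that $v$ may grow exponentially rather than sub-exponentially: one must check that the pairings $\langle\sigma,W(\pi(w)g,\pi(z)g)\rangle$ and the STFT $V_\Phi\sigma$ are well defined on $(\Sigma^1_1)'$ and that all the convolution relations survive for exponential $v$. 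Both are harmless, since $\Sigma^1_1\subset M^1_v$ with dense range and $v$ stays submultiplicative, so that Gr\"ochenig's scheme carries over verbatim with $\cS'$ replaced by $(\Sigma^1_1)'$.
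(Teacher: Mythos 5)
Your proposal is correct and follows essentially the same route as the paper: the paper does not reprove Theorem \ref{CR1} but explicitly delegates to the argument of \cite{charly06,GR} (with $\cS'(\rdd)$ replaced by $(\Sigma^1_1)'(\rdd)$), and that argument is exactly the one you reconstruct — the identity $|\langle\sigma^w\pi(z)g,\pi(w)g\rangle|=|V_{W(g,g)}\sigma(\tfrac{w+z}{2},j(w-z))|$ (the paper's Lemma, formulas \eqref{311}--\eqref{312}), sampling for (ii)$\Rightarrow$(iii), and the dual-window frame expansion plus convolution estimate for (iii)$\Rightarrow$(ii), with the $M^1_v$-membership of the dual window as the key nontrivial input. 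The same expansion/convolution pattern appears almost verbatim in the paper's own proof of Theorem \ref{equivdiscr-cont}.
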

The  Gabor kernel of $\sigma^w$: $\langle \sigma^w \pi(z)
g,\pi(w)g\rangle$ is referred as \emph{continuous} Gabor matrix, whereas  $\langle \sigma^w \pi(\lambda)
g,\pi(\mu)g\rangle$ is the Gabor matrix of $\sigma^w$.\par
Our  concern is in the case of a continuous Gabor matrix dominated by a function
$H\in W(\ell^\infty_v)(\rdd)$.
In this special case, Theorem \ref{CR1} can be simplified as follows.

\begin{theorem}\label{E7}  Assume  that $\G(g,\Lambda)$ is a
 frame for $L^2(\rd)$ with  $g\in M^1_v(\rd)$.  Then the
 following statements  are equivalent for $\sigma\in(\Sigma^1_1)'(\rdd)$:
\par {
(i)} $\sigma\in M^{\infty}_{1\otimes v\circ j^{-1}}(\rdd)$.\par
{(ii)}The  continuous Gabor matrix satisfies
\begin{equation}\label{CC1bis} |\langle \sigma^w \pi(z)
g,\pi(w)g\rangle|\lesssim v(w-z),\qquad \forall
w,z\in\rdd.
\end{equation}
{ (iii)}  The   Gabor matrix  satisfies
\begin{equation}\label{CC2bis} |\langle \sigma^w \pi(\lambda)
g,\pi(\mu)g\rangle|\lesssim v(\mu-\lambda),\qquad \forall
\lambda,\mu\in \Lambda.
\end{equation}
\end{theorem}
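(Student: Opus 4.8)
The plan is to read off Theorem \ref{E7} from Theorem \ref{CR1} by specializing to $q=\infty$, the only substantive point being to see what the Wiener amalgam and sequence conditions say at this endpoint. Since $M^{\infty,\infty}_m=M^\infty_m$, statement (i) of Theorem \ref{E7} is exactly statement (i) of Theorem \ref{CR1} with $q=\infty$. Hence the chain of equivalences (i)$\Leftrightarrow$(ii)$\Leftrightarrow$(iii) is already granted by Theorem \ref{CR1}, and all that remains is to verify that its conditions (ii) and (iii) collapse into the explicit off-diagonal bounds \eqref{CC1bis} and \eqref{CC2bis}. The real analytic content, namely the identity writing the continuous Gabor matrix $\langle \sigma^w\pi(z)g,\pi(w)g\rangle$ as a time-frequency representation of $\sigma$ with Wigner window $W(g,g)$ evaluated at $\bigl(\tfrac{z+w}{2},j(w-z)\bigr)$, is internal to Theorem \ref{CR1} and may be used as a black box.

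The one elementary ingredient I would isolate first is that the weight is comparable to a constant on each cell of the lattice: since $v$ is continuous, submultiplicative and $\geq 1$ (cf. \eqref{weight}, so bounded above on the compact closure of $\mathcal{Q}\cup(-\mathcal{Q})$ and below by $1$), the inequalities $v(u)\leq v(u-\lambda)v(\lambda)$ and $v(\lambda)\leq v(\lambda-u)v(u)$ give $v(u)\asymp v(\lambda)$ uniformly for $u\in\lambda+\mathcal{Q}$. This is the bridge between the discrete weighted norms and pointwise weighted bounds.

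With this in hand I would dispatch the discrete condition (iii) first, as it is the cleanest. At $q=\infty$ one has $\ell^\infty_v(\Lambda)=\{h:\ \sup_\nu |h(\nu)|\,v(\nu)<\infty\}$, so the assertion that some $h\in\ell^\infty_v$ dominates the Gabor matrix is equivalent to a single uniform weighted bound; in one direction one sets $h(\nu):=\sup_{\mu-\lambda=\nu}|\langle \sigma^w\pi(\lambda)g,\pi(\mu)g\rangle|$ and checks $h\in\ell^\infty_v$, and the converse is immediate. For the continuous condition (ii), unravelling the definition of the amalgam space gives that $H\in W(\ell^\infty_v)(\rdd)$ means $\sup_\lambda v(\lambda)\,\operatorname*{ess\,sup}_{u\in\lambda+\mathcal{Q}}|H(u)|<\infty$, which by the cellwise comparison $v(u)\asymp v(\lambda)$ is the same as $\sup_u v(u)\,|H(u)|<\infty$. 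Thus the abstract requirement that the continuous Gabor matrix be dominated by some $H\in W(\ell^\infty_v)$ is equivalent to a single explicit envelope expressed directly through $v$, that is, to condition (ii) of Theorem \ref{E7}.

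Since the hypotheses on the window ($g\in M^1_v(\rd)$ and $\G(g,\Lambda)$ a frame) are verbatim the same in both theorems, nothing further must be checked there. I therefore expect the only real obstacle to be the bookkeeping of this endpoint reduction, in particular the cellwise comparison $v(u)\asymp v(\lambda)$ and the passage from the weighted amalgam and sequence norms to the stated uniform estimates; everything deeper is absorbed into Theorem \ref{CR1}.
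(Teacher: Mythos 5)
Your reduction is exactly the paper's (implicit) argument: Theorem \ref{E7} is presented as the $q=\infty$ specialization of Theorem \ref{CR1}, and your cellwise comparison $v(u)\asymp v(\lambda)$ for $u\in\lambda+\mathcal{Q}$ is precisely the bookkeeping needed to collapse the $W(\ell^\infty_v)$ and $\ell^\infty_v$ conditions to uniform envelopes. One caution: your (correct) computation gives $\sup_u v(u)\,|H(u)|<\infty$, i.e. $|H(u)|\lesssim v(u)^{-1}$, so the resulting bounds in (ii)--(iii) are $\lesssim v(w-z)^{-1}$ (a decay estimate, consistent with the decaying weight $w_{s,-\eps}$ in Theorem \ref{caratterizz}); the literal ``$\lesssim v(w-z)$'' in the statement should be read as its reciprocal, and you should not identify your envelope with the printed one without noting this.
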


Our main aim here is to give a counterpart of Theorem \ref{E7} for weights of super-exponential growth.
First, let us survey the new results of  \cite[Section 4]{CNR12} which provide the arguments for the characterization of Theorem \ref{caratterizz} below.\par

The crucial relation between the action of the Weyl operator $\sigma^w$ on time-frequency shifts and the short-time Fourier transform of its symbol, contained in \cite[Lemma 3.1]{charly06} can now be extended to Gelfand-Shilov spaces  and their dual spaces as follows.
\begin{lemma} Consider $s\geq1/2$, $g\in S^s_s(\rd)$, $\Phi=W(g,g)$. Then, for $\sigma\in (S^s_s)'(\rdd)$,
\begin{equation}\label{311}
|\la\sigma^w \pi(z)g,\pi(w) g\ra|=\left|V_\Phi \sigma\left(\frac{z+w}2,j(w-z)\right)\right|=|V_\Phi\sigma(u,v)|
\end{equation}
and
\begin{equation}\label{312}
|V_\Phi \sigma(u,v)|=\left|\la\sigma^w \pi\left(u-\frac12 j^{-1}(v)\right)g,\pi\left(u+\frac12 j^{-1}(v)\right) g\ra\right|.
\end{equation}
\end{lemma}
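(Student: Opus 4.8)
The plan is to reduce everything to the fundamental duality between the Weyl transform and the cross-Wigner distribution,
\begin{equation}\label{weylwigner}
\langle \sigma^w f, h\rangle = \langle \sigma, W(h,f)\rangle ,
\end{equation}
which follows directly from the defining integral \eqref{Weyl} after the change of variables $u=(x+y)/2$, $t=x-y$, upon recognizing the resulting kernel as $\overline{W(h,f)}$ via \eqref{cross-wigner}. The first task is to certify that \eqref{weylwigner} is legitimate in the Gelfand-Shilov setting: for $f,h\in S^s_s(\rd)$ the cross-Wigner distribution $W(h,f)$ lies in $S^s_s(\rdd)$ by \eqref{wigner-gelfand}, so the right-hand side is a well-defined pairing of $\sigma\in(S^s_s)'(\rdd)$ against a genuine test function, and the identity extends from $S^s_s\times S^s_s$ by the continuity of $\sigma^w:S^s_s(\rd)\to (S^s_s)'(\rd)$ granted through the kernel Theorem \ref{kernelT}. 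Applying \eqref{weylwigner} with $f=\pi(z)g$ and $h=\pi(w)g$ yields
\[
\langle \sigma^w \pi(z)g,\pi(w)g\rangle=\langle \sigma, W(\pi(w)g,\pi(z)g)\rangle .
\]

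The second, computational step is the covariance of the cross-Wigner distribution under \tfs s. Writing $z=(z_1,z_2)$, $w=(w_1,w_2)$ and inserting $\pi(w)g=M_{w_2}T_{w_1}g$, $\pi(z)g=M_{z_2}T_{z_1}g$ into \eqref{cross-wigner}, I would perform the substitution $t\mapsto t+(w_1-z_1)$ to recenter the two copies of $g$ about their common midpoint. This gives
\[
W(\pi(w)g,\pi(z)g)(x,\o)=c(z,w)\,e^{2\pi i[(w_2-z_2)x-(w_1-z_1)\o]}\,\Phi\Big(x-\tfrac{z_1+w_1}2,\o-\tfrac{z_2+w_2}2\Big),
\]
where $c(z,w)$ is a unimodular constant and $\Phi=W(g,g)$, which by \eqref{wigner-gelfand} indeed belongs to $S^s_s(\rdd)$ and is therefore an admissible window for the STFT of $\sigma\in(S^s_s)'(\rdd)$.

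Finally I would read off the pairing $\langle \sigma, W(\pi(w)g,\pi(z)g)\rangle$ as a short-time Fourier transform of $\sigma$: testing $\sigma$ against the translated-modulated window above produces, up to the unimodular factors, exactly $V_\Phi\sigma$ evaluated at the translation point $u=(z+w)/2$ and at the modulation frequency $(w_2-z_2,\,z_1-w_1)=j(w-z)$. Taking absolute values annihilates the phases and gives \eqref{311}. Formula \eqref{312} then follows by inverting the affine bijection $(z,w)\mapsto(u,v)=\big((z+w)/2,\,j(w-z)\big)$ of $\rdd\times\rdd$, whose inverse is $w=u+\tfrac12 j^{-1}(v)$, $z=u-\tfrac12 j^{-1}(v)$.

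I expect the main obstacle to be not the algebra of the covariance identity, which repeats the manipulation of \cite[Lemma 3.1]{charly06}, but the functional-analytic bookkeeping needed to transplant the Schwartz-space argument to the pair $(S^s_s,(S^s_s)')$. One must ensure that $W(h,f)\in S^s_s(\rdd)$ for all $f,h\in S^s_s(\rd)$, that \tfs s preserve $S^s_s(\rd)$, and that both $V_\Phi\sigma$ and the Gabor matrix entries are genuinely defined for the ultra-distributional symbol $\sigma$. These points rest on \eqref{wigner-gelfand}, on the nuclearity of $S^s_s$ together with Theorem \ref{kernelT}, and on the admissibility of $\Phi$ as a window established above.
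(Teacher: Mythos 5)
Your proof is correct and follows essentially the same route as the paper: the paper's own proof consists precisely of noting that $\Phi=W(g,g)\in S^s_s(\rdd)$ by \eqref{wigner-gelfand} so that $V_\Phi\sigma$ is well defined for $\sigma\in(S^s_s)'(\rdd)$, and then deferring the computation to \cite[Lemma 3.1]{charly06}, which is exactly the Weyl--Wigner duality plus the covariance of the cross-Wigner distribution that you carry out explicitly. Your version simply makes the cited computation and the Gelfand--Shilov bookkeeping explicit, and both are accurate.
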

\begin{proof}
Since $\Phi=W(g,g)\in S^s_s(\rdd)$ for $g\in S^s_s(\rd)$ by \eqref{wigner-gelfand}, the duality $\la \sigma,\pi(u,v)\Phi\ra_{(S^s_s)'\times S^s_s}$ is well-defined so that the short-time Fourier transform $V_\Phi \sigma(u,v)$ makes sense.  The rest of the proof is analogous to \cite[Lemma 3.1]{charly06}.
\end{proof}

Given a pseudodifferential operator $\sigma^w$, with smooth
symbol $\sigma\in\cC^\infty(\rdd)$, we  exhibit the
connection between the Gevrey, analytic or ultra-analytic
regularity of $\sigma$ (as considered in Theorem \ref{teo1}, the dimension being now $2d$), the (ultra-)modulation space $\sigma$
belongs to (cf. Definition \ref{defmodnorm}), and the decay of the continuous Gabor matrix of
$\sigma^w$.\par
 Recall the weights $w_{s,\eps}$ defined in \eqref{pesiw} and here used as functions over $\rdd$.
\begin{theorem}\label{caratterizz} Let $s\geq1/2$ and consider a window function $g\in S^s_s(\rd)$. Set $\Phi=W(g,g)$. Then the following properties are equivalent for $\sigma\in\cC^\infty(\rdd)$:
\par {(i)}  There exists $\eps>0$ such that $\sigma\in M^{\infty}_{1\otimes w_{s,\eps},\Phi}(\rdd)$.\par
{(ii)} There exists $C>0$ such that the symbol $\sigma$ satisfies
\begin{equation}\label{simbsmooth} |\partial^\a \sigma(z)|\lesssim  C^{|\a|}(\a!)^{s}, \quad \forall\, z\in\rdd,\,\forall \a\in\bN^{2d}.\end{equation}
\par{(iii)} There exists $\eps>0$ such that
\begin{equation}\label{unobis2s}|\langle \sigma^w \pi(z)
g,\pi(w)g\rangle|\lesssim w_{s,-\eps}(w-z),\qquad \forall\,
z,w\in\rdd.
\end{equation}
\end{theorem}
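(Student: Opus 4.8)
The plan is to prove the cyclic chain of implications $(i)\Rightarrow(ii)\Rightarrow(iii)\Rightarrow(i)$, exploiting the identities \eqref{311}--\eqref{312} of the preceding lemma as the bridge between the Gabor matrix and the STFT $V_\Phi\sigma$. The key structural observation is that, since $g\in S^s_s(\rd)$ implies $\Phi=W(g,g)\in S^s_s(\rdd)$ by \eqref{wigner-gelfand}, the window $\Phi$ satisfies the hypotheses of Theorem \ref{teo1} in dimension $2d$; indeed by Proposition \ref{pro3.2} a Gelfand--Shilov function of type $S^s_s$ satisfies the derivative bound \eqref{finestra} needed there (here we invoke the reduction $m=v=1$ stated in the excerpt). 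Thus Theorem \ref{teo1}, applied to $\sigma\in\cC^\infty(\rdd)$ with window $\Phi$, already delivers the equivalence between the smoothness estimate \eqref{simbsmooth} for $\sigma$ and the decay of $V_\Phi\sigma$ in the frequency variable. This reduces almost everything to bookkeeping with the change of variables in \eqref{311}.

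First I would prove $(ii)\Leftrightarrow(iii)$ together by the following route. Condition \eqref{simbsmooth} is precisely hypothesis $(i)$ of Theorem \ref{teo1} (with $2d$ in place of $d$, and $m\equiv1$), whose condition $(iii)$ reads $|V_\Phi\sigma(u,v)|\lesssim e^{-\eps|v|^{1/s}}=w_{s,-\eps}(v)$ for some $\eps>0$. Now I substitute the identity \eqref{311}: setting $u=\tfrac{z+w}2$ and $v=j(w-z)$, and using that $j$ is a rotation so that $|v|=|j(w-z)|=|w-z|$, the bound $|V_\Phi\sigma(u,v)|\lesssim e^{-\eps|v|^{1/s}}$ becomes exactly $|\la\sigma^w\pi(z)g,\pi(w)g\ra|\lesssim e^{-\eps|w-z|^{1/s}}=w_{s,-\eps}(w-z)$, which is \eqref{unobis2s}. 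The reverse direction uses \eqref{312} with $u,v$ as the free variables: the decay of the Gabor matrix translates back into the decay of $V_\Phi\sigma$, again because $|j^{-1}(v)|=|v|$. Hence $(ii)\Leftrightarrow(iii)$ is essentially Theorem \ref{teo1} read through the rotation-invariant change of variables.

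It remains to link $(i)$ with the rest. By Definition \ref{defmodnorm}, membership $\sigma\in M^{\infty}_{1\otimes w_{s,\eps},\Phi}(\rdd)$ means exactly that $\|V_\Phi\sigma\cdot(1\otimes w_{s,\eps})\|_{L^\infty}<\infty$, i.e. $\sup_{u,v}|V_\Phi\sigma(u,v)|\,e^{\eps|v|^{1/s}}<\infty$, which is the uniform bound $|V_\Phi\sigma(u,v)|\lesssim w_{s,-\eps}(v)$ appearing as condition $(iii)$ of Theorem \ref{teo1}. Therefore $(i)$ is merely a restatement of that STFT-decay condition in modulation-space language, and the equivalence $(i)\Leftrightarrow(ii)$ is again immediate from Theorem \ref{teo1}. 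One caveat I must address for the ultra-analytic range $1/2\le s<1$: here $w_{s,\eps}$ is a super-exponential weight, so the modulation space is the window-dependent object of Definition \ref{defmodnorm} rather than a classical Feichtinger modulation space, and I should check that the integral defining the norm converges for $\sigma\in\cC^\infty$ satisfying \eqref{simbsmooth} — this is guaranteed by item $(i)$ of the remarks following Definition \ref{defmodnorm}, since $\Phi\in S^s_s\subset\cS^{1/2}_{1/2}$ when $s>1/2$ and the boundary case $s=1/2$ is handled by the explicit Gaussian-type estimate there.

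The main obstacle, and the one point requiring genuine care rather than substitution, is verifying that Theorem \ref{teo1} is legitimately applicable with the window $\Phi$ when $s$ can be as small as $1/2$: I must confirm that $\Phi=W(g,g)$ satisfies the window hypothesis \eqref{finestra} in that regime. By Proposition \ref{pro3.2}, $\Phi\in S^s_s(\rdd)$ yields $|\partial^\alpha\Phi(z)|\lesssim A^{|\alpha|}(\alpha!)^s e^{-\eps|z|^{1/s}}$, and integrating against the trivial weight $v\equiv1$ gives $\|\partial^\alpha\Phi\|_{L^1}\lesssim C^{|\alpha|}(\alpha!)^s$, which is precisely \eqref{finestra}; the exponential decay factor ensures the $L^1$ integrability uniformly, and the constant $C$ absorbs the finite integral $\int e^{-\eps|z|^{1/s}}\,dz$. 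Once this window check is in place, the whole theorem follows by chaining the two equivalences, and no further estimates beyond the rotation-invariance $|j^{\pm1}(\cdot)|=|\cdot|$ are needed.
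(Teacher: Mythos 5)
Your proposal is correct and follows essentially the same route as the paper: the identities \eqref{311}--\eqref{312} together with rotation invariance of $j$ translate between the continuous Gabor matrix and $V_\Phi\sigma$, condition $(i)$ is read off as the STFT decay $|V_\Phi\sigma(u,v)|\lesssim w_{s,-\eps}(v)$ by definition of the norm, and $(ii)\Leftrightarrow(iii)$ is obtained from Theorem \ref{teo1} applied in dimension $2d$ with window $\Phi=W(g,g)\in S^s_s(\rdd)$. Your explicit verification that $\Phi$ satisfies the window hypothesis \eqref{finestra} (via Proposition \ref{pro3.2} and the reduction $m=v=1$) is a point the paper leaves implicit, but the argument is the same.
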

\begin{proof} $(i)\Rightarrow (iii)$.  Assuming $\sigma\in M^{\infty}_{1\otimes w_{s,\eps},\Phi}(\rdd)$, that is
$$\sup_{u,v\in\rdd} |V_\Phi \sigma(u,v)| e^{\eps|v|^{\frac1s}}<\infty
$$
 and using  \eqref{312}, we obtain the claim:
 \begin{align*}|\langle \sigma^w \pi(z)
g,\pi(w)g\rangle|&= \left|V_\Phi \sigma (\frac{w+z}2, j(w-z))\right|\\
&\leq \sup_{u\in\rdd} |V_\Phi \sigma (u, j(w-z))|
\lesssim  e^{-\eps|w-z|^{\frac1s}}.
\end{align*}
$(iii)\Rightarrow (i)$. Relation \eqref{312} and the decay assumption \eqref{unobis2s} give
\begin{align}\label{e88}
|V_\Phi \sigma(u,v)|&=\left|\la\sigma^w \pi\left(u-\frac12 j^{-1}(v)\right)g,\pi\left(u+\frac12 j^{-1}(v)\right)g\ra\right|\\
&\lesssim  e^{-\eps | j^{-1}(v)|^{\frac1s}}= e^{-\eps | v|^{\frac1s}}.
\end{align}
The equivalences $(ii)\Leftrightarrow (iii)$ are proved in Theorem \cite[Theorem 4.2]{CNR12}.
For sake of clarity, we give a brief sketch of this result. The techniques are similar to \cite[Theorem 3.2]{charly06}. \par
$(ii)\Rightarrow (iii)$. The window $\Phi=W(g,g)\in  S^s_s(\rdd)$, for $g\in S^s_s(\rd)$ by \eqref{wigner-gelfand}, and satisfies the assumptions of Theorem \ref{teo1}. Hence, using the equivalence \eqref{smoothf} $\Leftrightarrow$ \eqref{STFTeps}, the  assumption \eqref{simbsmooth} is equivalent to the following decay estimate of the corresponding short-time Fourier transform
$$|V_\Phi \sigma(u,v)|\lesssim  e^{-\eps |v|^{\frac1s}}, \quad u,v\in\rdd,
$$
for a suitable $\eps>0$, hence $$|V_\Phi \sigma\big(\frac{z+w}2,j(w-z)\big)|\lesssim e^{-\eps |j(w-z)|^{\frac1s}}= e^{-\eps |w-z|^{\frac1s}}
$$
which combined with   \eqref{311} yields
 $(iii)$.\par
$(iii)\Rightarrow (ii)$. We use relation \eqref{312} and the decay assumption \eqref{unobis2s} again, which give \eqref{e88}
and using the equivalence  \eqref{smoothf} $\Leftrightarrow$ \eqref{STFTeps} we obtain the claim.
\end{proof}
\par
\begin{remark} a) We observe that the constant $\eps>0$, which depends on the choice of $g\in S^s_s(\rd)$, is the same in $(i)$ and $(iii)$. Whereas the link between $\eps>0$ and the constant $C>0$ in $(ii)$ is specified in Remark \ref{linkconst}.\\
 b) If we consider $s\geq1$, that is the symbol $\sigma$ is a Gevrey or an analytic symbol, then the  equivalence $(i)\Leftrightarrow(ii)$ follows by combining Theorems \ref{E7} and \cite[Theorem 4.2]{CNR12}.
\end{remark}

\par
Of course the estimate \eqref{unobis2s} implies  the discrete analog \eqref{unobis2discr} below. The vice versa is not obvious and requires the existence of a  Gabor frame $\G(g,\Lambda)$ having $g\in  S^{1/2} _{1/2} (\rd)$ and a dual window $\gamma \in  S^{1/2} _{1/2} (\rd)$ as well.  We call such a Gabor frame a \emph{Gabor super-frame}. The existence of Gabor super-frames is due to  a  result obtained by   Gr{\"o}chenig and  Lyubarskii in \cite{GL09}.  They find sufficient conditions on the lattice $\Lambda=A \bZ^2$, $A\in GL(2,\R)$,  such that $g=\sum_{k=0}^n c_k H_k$, with $H_k$ Hermite function, forms
a   Gabor frame $\G(g,\Lambda)$. Besides they prove the existence of dual windows $\gamma$ that belong to the space $S^{1/2} _{1/2} (\R)$ (cf. \cite[Lemma 4.4]{GL09}).
This theory transfers to the $d$-dimensional case simply by taking  tensor products $g=g_1\otimes\cdots\otimes g_d\in S^{1/2} _{1/2} (\rd)$ of windows as above, which define a Gabor frame on the lattice $\Lambda_1\times\cdots\times\Lambda_d$ and   possess a dual window $\gamma=\gamma_1\otimes\cdots\otimes \gamma_d$ in the same space $\in S^{1/2} _{1/2} (\rd)$. \par
The Gabor super-frames allow the discretization of the kernel in \eqref{unobis2s}.
\begin{theorem}\label{equivdiscr-cont}  Let $\G(g,\Lambda)$ a Gabor super-frame for $\lrd$. Consider  $s\geq1/2$
 and a symbol  $\sigma\in\cC^\infty(\rdd)$. Then the following properties are equivalent:
\par
{(i)} There exists $\eps>0$ such that the estimate \eqref{unobis2s} holds.\par
{(ii)} There exists $\eps>0$ such that
\begin{equation}\label{unobis2discr} |\langle \sigma^w \pi(\mu)
g,\pi(\lambda)g\rangle|\lesssim  w_{s,-\eps}(\lambda-\mu),\qquad \forall\,
\lambda,\mu\in\Lambda.
\end{equation}
\end{theorem}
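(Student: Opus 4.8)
The implication $(i)\Rightarrow(ii)$ is immediate, since the discrete estimate \eqref{unobis2discr} is nothing but the restriction of \eqref{unobis2s} to lattice points $z=\mu,\,w=\lambda\in\Lambda$. All the content lies in the converse $(ii)\Rightarrow(i)$, and the plan is to reconstruct the continuous Gabor matrix from its samples by means of the super-frame reproducing formula.

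Let $\gamma\in S^{1/2}_{1/2}(\rd)$ be a dual window of the super-frame $\G(g,\Lambda)$. Since $g,\gamma\in S^{1/2}_{1/2}(\rd)\subset S^s_s(\rd)$, both $\pi(z)g$ and $\pi(w)g$ belong to $S^{1/2}_{1/2}(\rd)$ and admit the Gabor expansions
\[
\pi(z)g=\sum_{\mu\in\Lambda}\la \pi(z)g,\pi(\mu)\gamma\ra\,\pi(\mu)g,\qquad \pi(w)g=\sum_{\lambda\in\Lambda}\la \pi(w)g,\pi(\lambda)\gamma\ra\,\pi(\lambda)g,
\]
which converge in the topology of $S^{1/2}_{1/2}(\rd)$: this is precisely where the super-frame hypothesis enters, the Gaussian decay of the coefficients dominating the growth of the Gelfand--Shilov seminorms of $\pi(\mu)g$. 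As $\sigma^w$ maps $S^{1/2}_{1/2}(\rd)$ continuously into its dual, I would insert both expansions into the pairing and interchange the summations with $\sigma^w$, obtaining
\[
\la \sigma^w\pi(z)g,\pi(w)g\ra=\sum_{\mu,\lambda\in\Lambda}\la \pi(z)g,\pi(\mu)\gamma\ra\,\overline{\la \pi(w)g,\pi(\lambda)\gamma\ra}\,\la \sigma^w\pi(\mu)g,\pi(\lambda)g\ra.
\]

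Next I would estimate the three factors separately. The inner factor is controlled by hypothesis $(ii)$: $|\la\sigma^w\pi(\mu)g,\pi(\lambda)g\ra|\lesssim e^{-\eps|\lambda-\mu|^{1/s}}$. For the cross-coefficients, the composition rule $\pi(\mu)^\ast\pi(z)=e^{i\phi}\pi(z-\mu)$ gives $|\la\pi(z)g,\pi(\mu)\gamma\ra|=|V_g\gamma(z-\mu)|$, and since $g,\gamma\in S^{1/2}_{1/2}(\rd)$ implies $V_g\gamma\in S^{1/2}_{1/2}(\rdd)$ by \eqref{zimmermann1}, one has $|V_g\gamma(u)|\lesssim e^{-a|u|^2}$ for some $a>0$ (and likewise for the $\lambda$-factor). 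Because $1/s\le2$, these Gaussians dominate $e^{-\delta|z-\mu|^{1/s}}$ and $e^{-\delta|w-\lambda|^{1/s}}$: for any $\delta>0$ if $s>1/2$, and for any $\delta<a$ if $s=1/2$. Hence
\[
|\la \sigma^w\pi(z)g,\pi(w)g\ra|\lesssim\sum_{\mu,\lambda\in\Lambda}e^{-\delta|z-\mu|^{1/s}}e^{-\eps|\lambda-\mu|^{1/s}}e^{-\delta|w-\lambda|^{1/s}}.
\]

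The crux --- and the only genuine obstacle --- is to collapse this double sum into the single exponential $e^{-\eps'|w-z|^{1/s}}$. Setting $p=1/s$ and writing $w-z=(w-\lambda)+(\lambda-\mu)+(\mu-z)$, I need an upper bound for $|w-z|^p$ in terms of $|w-\lambda|^p,\,|\lambda-\mu|^p,\,|z-\mu|^p$. For $s\ge1$ (so $p\le1$) this is just subadditivity of $t\mapsto t^p$; for $1/2\le s<1$ (so $p\in(1,2]$) subadditivity fails and I would instead invoke the convexity inequality $(a+b+c)^p\le3^{p-1}(a^p+b^p+c^p)$. In both cases $|w-z|^p\le 3^{\max(p-1,0)}(|w-\lambda|^p+|\lambda-\mu|^p+|z-\mu|^p)$. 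Choosing $\eps'>0$ and $\delta'>0$ small enough that $3^{\max(p-1,0)}\eps'\le\eps$ and $3^{\max(p-1,0)}\eps'+\delta'\le\delta$ --- which is possible precisely because $\delta$ may be taken large when $s>1/2$ and close to $a$ when $s=1/2$ --- yields the pointwise bound
\[
\delta|z-\mu|^p+\eps|\lambda-\mu|^p+\delta|w-\lambda|^p\ge\eps'|w-z|^p+\delta'|z-\mu|^p+\delta'|w-\lambda|^p.
\]
Factoring out $e^{-\eps'|w-z|^{1/s}}$ then leaves the residual double sum $\sum_{\mu,\lambda\in\Lambda}e^{-\delta'|z-\mu|^{1/s}}e^{-\delta'|w-\lambda|^{1/s}}$, which splits as a product of two lattice series that converge and are bounded uniformly in $z,w$. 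This produces \eqref{unobis2s} and completes the argument. The expected difficulty is entirely in this last weight manipulation, reflecting the failure of subadditivity of $|\cdot|^{1/s}$ in the super-exponential regime $1/2\le s<1$.
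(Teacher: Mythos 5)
Your proof is correct and follows essentially the same route as the paper's: both reconstruct the continuous time-frequency shifts from the super-frame with a dual window $\gamma\in S^{1/2}_{1/2}(\rd)$, exploit the Gaussian decay of $V_\gamma g$ coming from \eqref{zimmermann1}, and absorb the failure of subadditivity of $|\cdot|^{1/s}$ for $1/2\le s<1$ via the convexity inequality $(a+b+c)^p\le 3^{p-1}(a^p+b^p+c^p)$, which the paper packages as the weight-convolution estimate \eqref{convpesi}. The only cosmetic difference is that the paper first writes $w=\lambda+u$, $z=\mu+u'$ with $u,u'$ in a fundamental domain and expands only $\pi(u)g$, $\pi(u')g$ (then passes from $\lambda-\mu$ to $w-z$ by compactness), whereas you expand $\pi(z)g$ and $\pi(w)g$ directly.
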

\begin{proof} Let us sketch $(ii)\Rightarrow (i)$. The pattern of \cite[Theorem 3.2]{charly06} can be adapted to this proof by using  a Gabor super-frame $\G(g,\Lambda)$, with a dual window $\gamma\in S^{1/2} _{1/2} (\rd)$ and the following property of the weights \eqref{pesiw} (\cite[Lemma 3.5]{CNR12}):
\begin{equation}\label{convpesi} (w_{s,-\eps}\ast w_{s,-\eps})(\lambda):=\sum_{\nu\in\Lambda}w_{s,-\eps}(\lambda-\nu)w_{s,-\eps}(\nu)\lesssim \left\{\begin{array}{ll}
w_{s,-\eps}(\lambda), & \,\mbox{for}\quad s\geq1\\
w_{s,\,-\eps 2^{-1/s}}(\lambda), &\,\mbox{for}\quad \frac12\leq s<1.
\end{array}
\right.
\end{equation}
\par
 Let $\mathcal{Q}$ be a symmetric relatively compact fundamental domain of the lattice $\Lambda\subset\rdd$.
 Given $w,z\in\rdd$, we can write them uniquely as $w=\lambda+u$, $z=\mu+u'$, for $\lambda,\mu\in\Lambda$ and $u,u'\in \mathcal{Q}$. Using the Gabor reproducing formula for the time-frequency shift $\pi(u)g\in S^{1/2} _{1/2} (\rd)$ we can write
\begin{equation*} \pi(u)g=\sum_{\nu\in\Lambda}\la\pi(u)g,\pi(\nu)\gamma\ra\pi(\nu) g.
\end{equation*}
Inserting the prior expansions in the assumption \eqref{unobis2discr},
\begin{align*}
&|\langle \sigma^w \pi(\mu+u')
g,\pi(\lambda+u)g\rangle|\notag\\
&\qquad\qquad\leq \sum_{\nu,\nu'\in\Lambda} |\langle \sigma^w \pi(\mu+\nu')g,\pi(\lambda+\nu)g\rangle|\,|\la \pi(u')g,\pi(\nu')\gamma\ra|\,|\la\pi(u)g,\pi(\nu)\gamma\ra|\notag\\
&\qquad\qquad\lesssim \sum_{\nu,\nu'\in\Lambda} m\left(\frac{\lambda+\mu+\nu+\nu'}2\right) e^{-\eps|\lambda+\nu-\mu-\nu'|^{\frac1s}}|V_\gamma g(\nu'-u')| |V_\gamma g(\nu-u)|\label{mag1}.
\end{align*}
Since the window functions $g,\gamma$ are both in $ S^{1/2} _{1/2} (\rd)$,  the  STFT $V_\gamma g$ is in $S^{1/2} _{1/2} (\rdd)$, due to \eqref{zimmermann1}. Thus there exists $h>0$ such that $ |V_\gamma g(z)|\lesssim e^{-h |z|^{2}}$, for every $z\in\rdd$.
Inserting this estimate in the previous majorization and using \eqref{convpesi} repeatedly,
\begin{align*}|\langle \sigma^w \pi(\mu+u')
g,\pi(\lambda+u)g\rangle|&\lesssim \sum_{\nu,\nu'\in\Lambda} e^{-\eps|\lambda+\nu-\mu-\nu'|^{\frac1s}}e^{-h |\nu|^{2}}e^{-h |\nu'|^{2}}\notag\\
&\lesssim \sum_{\nu,\nu'\in\Lambda} e^{-b (|\lambda+\nu-\mu-\nu'|^{\frac1s}+ |\nu|^{\frac1s}+|\nu'|^{\frac1s})}\\
&=w_{s,b}\ast w_{s,b}\ast w_{s,b})(\lambda-\mu)\\
&\leq e^{-\tilde{\eps}|\lambda-\mu|^{\frac1s}}
\end{align*}
for a suitable $ \tilde{\eps}>0$.\par
If $w,z\in\rdd$ and $w=\lambda+u$, $z=\mu+u'$, $\lambda,\mu\in\Lambda$, $u,u'\in\mathcal{Q}$, then $\lambda-\mu=w-z+u'-u$ and $u'-u\in \mathcal{Q}-\mathcal{Q}$, which is a relatively compact set, thus
\begin{equation}\label{stimaexp}e^{-\tilde{\eps}|\lambda-\mu|^{\frac1s}}\lesssim \sup_{u\in \mathcal{Q}-\mathcal{Q}}e^{-\tilde{\eps}|w-z+u|^{\frac1s}}\lesssim e^{-\tilde{\eps}|w-z|^{\frac1s}}.
\end{equation}
This gives the desired implication.
\end{proof}

\section{Sparsity of the Gabor matrix}\label{sec5}
 The operators $\sigma^w$ in Theorem \ref{caratterizz} enjoy a fundamental sparsity property. Indeed, let $\G(g,\Lambda)$ be a Gabor super-frame for $\lrd$. Then, as we saw,
 \begin{equation}\label{unobis2discrbis} |\langle \sigma^w \pi(\mu)
g,\pi(\lambda)g\rangle|\leq C e^{-\eps|\lambda-\mu|^{\frac1s}},\qquad \forall\,
\lambda,\mu\in\Lambda,
\end{equation}
with suitable constants $C>0$, $\eps>0$. This gives at once an exponential-type sparsity,
which is proved in \cite[Proposition 4.5]{CNR12} (we refer to
\cite{candes, guo-labate} for the more standard notion of super-polynomial sparsity) and recalled below.
\begin{proposition}
Let the Gabor matrix
$\langle \sigma^w \pi(\mu)
g,\pi(\lambda)g\rangle$ satisfy \eqref{unobis2discrbis}. Then it is sparse in the following sense.
Let $a$ be any column
or raw of the matrix, and let
$|a|_n$ be the $n$-largest
entry of the sequence $a$.
Then, $|a|_n$
satisfies
\[
|a|_n\leq C \displaystyle e^{-\epsilon n^{\frac1{2ds}}},\quad n\in\bN
\]
\end{proposition}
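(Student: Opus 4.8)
The plan is to count, for a fixed column or row of the Gabor matrix, how many entries can exceed a given threshold, and then deduce the decay rate of the reordered sequence $|a|_n$. By hypothesis \eqref{unobis2discrbis}, each entry indexed by $\lambda,\mu\in\Lambda$ is bounded by $Ce^{-\eps|\lambda-\mu|^{1/s}}$. Fixing, say, a row indexed by $\lambda$, the entries of $a$ are indexed by the lattice points $\mu\in\Lambda$, and the bound depends only on the difference $\lambda-\mu$. So it suffices to understand the decay of the single-variable sequence $\mu\mapsto e^{-\eps|\lambda-\mu|^{1/s}}$ on the lattice $\Lambda\subset\rdd$, a lattice of rank $2d$.

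First I would estimate the counting function. For a threshold $t>0$, the number of lattice points $\mu$ with $Ce^{-\eps|\lambda-\mu|^{1/s}}\geq t$ is the number of $\mu$ with $|\lambda-\mu|^{1/s}\leq \eps^{-1}\log(C/t)$, i.e. $|\lambda-\mu|\leq R$ with $R=\big(\eps^{-1}\log(C/t)\big)^{s}$. Since $\Lambda$ has rank $2d$, the number of lattice points in a ball of radius $R$ in $\rdd$ grows like $R^{2d}$; thus the number of entries exceeding $t$ is $\lesssim R^{2d}\asymp \big(\log(C/t)\big)^{2ds}$. Now to bound $|a|_n$, the $n$-th largest entry: if $n$ entries all have modulus at least $|a|_n$, then the counting bound with $t=|a|_n$ forces $n\lesssim \big(\log(C/|a|_n)\big)^{2ds}$. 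Solving this inequality for $|a|_n$ gives $\log(C/|a|_n)\gtrsim n^{1/(2ds)}$, hence $|a|_n\lesssim C\,e^{-\epsilon' n^{1/(2ds)}}$ for a suitable constant, which is exactly the claimed estimate.

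The one delicate point is making the lattice-point count rigorous with the correct exponent $2d$. The cleanest way I would argue is to note that the lattice $\Lambda=A\zdd$ is uniformly discrete and relatively separated, so that lattice points in a ball of radius $R$ in $\rdd$ number at most $c\,(1+R)^{2d}$ by a standard volume (packing) argument — each point can be surrounded by a disjoint fixed-size box, and these boxes fit inside a slightly enlarged ball whose volume is $\asymp R^{2d}$. This is where the ambient dimension $2d$ (rather than $d$) enters, and it is the origin of the exponent $1/(2ds)$ in the conclusion; one must be careful not to conflate the dimension of the lattice with $d$. Apart from this, the remaining steps are the elementary inversion of $n\lesssim(\log(C/t))^{2ds}$ and absorbing constants.

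I expect the main obstacle to be purely bookkeeping: tracking how the constants $C$ and $\eps$ from \eqref{unobis2discrbis} propagate through the logarithm-inversion so that the final exponent $\epsilon$ in $|a|_n\leq Ce^{-\epsilon n^{1/(2ds)}}$ is honestly positive and independent of which row or column one selects (the bound is uniform in $\lambda$, since it depends only on differences). No analytic difficulty arises beyond the monotone rearrangement and the lattice count; the argument is essentially a discrete tail estimate.
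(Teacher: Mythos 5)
Your argument is correct, but it takes a different route from the paper's. You bound the distribution function directly: the number of entries of a fixed row or column exceeding a threshold $t$ is at most the number of lattice points of $\Lambda$ in a ball of radius $R=(\eps^{-1}\log(C/t))^{s}$, which is $O((1+R)^{2d})$ since $\Lambda$ has rank $2d$; taking $t=|a|_n$ gives $n\lesssim(\log(C/|a|_n))^{2ds}$ and inverting the logarithm yields the claim. The paper instead reduces, via a discrete analog of Proposition \ref{equi}, to proving the factorial-type bounds $n^{\alpha}|a|_n\leq C^{\alpha+1}(\alpha!)^{2ds}$ for all $\alpha\in\bN$, and obtains these from the elementary rearrangement inequality $n^{1/p}|a|_n\leq\|a\|_{\ell^p}$ with $p=1/\alpha$, an integral comparison $\sum_{\lambda\in\Lambda}e^{-\eps p|\lambda|^{1/s}}\lesssim p^{-2ds}$ computed in polar coordinates via the Gamma function, and Stirling's formula. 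Both proofs hinge on the same geometric fact that lattice points within distance $R$ number like $R^{2d}$ (in the paper this enters through the $p^{-2ds}$ asymptotics of the sum); your version is more elementary and makes the origin of the exponent $1/(2ds)$ transparent, while the paper's version reuses the machinery of Proposition \ref{equi} already set up in Section 2 and tracks the dependence of the final $\epsilon$ on the original $\eps$ and $s$ through an explicit computation. The only points to make fully rigorous in your write-up are the trivial case $|a|_n=0$ and the passage from $(n/c)^{1/(2d)}\leq 1+R$ to $R\gtrsim n^{1/(2d)}$ for large $n$ (small $n$ being absorbed into the constant $C$); neither is a genuine obstacle.
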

for some constants $C>0,\epsilon>0$.
\begin{proof}
By a discrete analog of Proposition \ref{equi} it suffices to prove that
\[
n^\alpha |a|_n\leq C^{\alpha+1}(\alpha!)^{2ds},\quad \alpha\in\mathbb{N}.
\]
On the other hand we have
\[
n^{\frac1p}\cdot|a|_n\leq
\|a\|_{\ell^p},
\]
for every $0<p\leq\infty$. Hence by \eqref{unobis2discrbis} and setting $p=1/\alpha$ we obtain
\[
n^\alpha |a|_n\leq \Big(\sum_{\lambda\in\Lambda} e^{-\epsilon p|\lambda-\mu|^{\frac1s}}\Big)^{\frac1p}
=
 \Big(\sum_{\lambda\in\Lambda} e^{-\epsilon p|\lambda|^{\frac1s}}\Big)^{\frac1p}.
\]
Let $\mathcal{Q}$ be a fundamental domain of the lattice $\Lambda$. Then if $x\in\lambda+\mathcal{Q}$, $\lambda\in\Lambda$, we have $|x|\leq |\lambda|+C_0$, therefore $|x|^{1/s}\leq C_1(|\lambda|^{1/s}+1)$. Hence
\begin{align*}
\sum_{\lambda\in\Lambda} e^{-\epsilon p|\lambda|^{\frac1s}}&\leq C_2 \int_{\rdd} e^{-\epsilon p|x|^{\frac1s}}\, dx=\int_{\mathbb{S}^{2d-1}}d\sigma\int_0^{+\infty} e^{-\eps p \rho^{\frac 1s}}\rho^{2d-1}d \rho\\
&= \frac {C_3s}{(\eps p)^{2ds}}\int_0^{+\infty} e^{-t} t^{2d s-1} dt= \frac{C_3s \Gamma(2ds)}{(\eps p)^{2ds}} =\frac{C_4}{p^{2ds}}
\end{align*}
Finally, by Stirling's formula,
\[
n^\alpha |a|_n\leq \frac{C_4^{1/p} }{p^{\frac{2ds}{p}}}\leq C_5^{\alpha+1}(\alpha!)^{2ds}.
\]
\end{proof}

\end{document}